\documentclass[final,onecolumn]{IEEEtran}

\usepackage{amsmath, amssymb, bbm, xspace}







\newcommand{\Real}{\ensuremath{\mathbb{R}}}

\def\spose#1{\hbox to 0pt{#1\hss}}

\def\text #1{\hbox{\quad#1\quad}}

\def\x{{\mathbf{x}}}
\def\w{{\mathbf{w}}}
\def\z{{\mathbf{z}}}
\def\u{{\mathbf{u}}}


\def\nthinsp{\mskip -2   mu}




\def\superstar{^{\raise 0.5pt\hbox{$\nthinsp *$}}}
\def\SUPERSTAR{^{\raise 0.5pt\hbox{$*$}}}

\def\lamstarT {\lambda^{\raise 0.5pt\hbox{$\nthinsp *$}T}}



\def\Uscr{{\cal U}}

\def\Xscr{{\cal X}}

\def\hbar{\skew{4.2}\bar h}

		\def\bkE{{\rm I\kern-.17em E}}
		\def\bk1{{\rm 1\kern-.17em l}}
		\def\bkD{{\rm I\kern-.17em D}}
		\def\bkR{{\rm I\kern-.17em R}}
		\def\bkR{\mathbb{R}}
		\def\bkP{{\rm I\kern-.17em P}}
		\def\bkY{{\bf \kern-.17em Y}}
		\def\bkZ{{\bf \kern-.17em Z}}


		\def\beq{\begin{eqnarray}}
		\def\bc{\begin{center}}
		\def\be{\begin{enumerate}}
		\def\bi{\begin{itemize}}
		\def\bs{\begin{small}}
		\def\bS{\begin{slide}}
		\def\ec{\end{center}}
		\def\ee{\end{enumerate}}
		\def\ei{\end{itemize}}
		\def\es{\end{small}}
		\def\eS{\end{slide}}
		\def\eeq{\end{eqnarray}}

	\def\cp2problem#1#2#3#4{\fbox
		 {\begin{tabular*}{0.9\textwidth}
			{@{}l@{\extracolsep{\fill}}l@{\extracolsep{6pt}}l@{\extracolsep{\fill}}c@{}}
				#1 & & $#4 $ 
			\end{tabular*}}}
\newcommand{\pmat}[1]{\begin{pmatrix} #1 \end{pmatrix}}
		
		\renewcommand{\emph}[1]{\textbf{#1}}

		\def\bkE{{\rm I\kern-.17em E}}
		\def\bk1{{\rm 1\kern-.17em l}}
		\def\bkD{{\rm I\kern-.17em D}}
		\def\bkP{{\rm I\kern-.17em P}}
		
		\def\bkZ{{\bf{Z}}}

\newcommand {\beeq}[1]{\begin{equation}\label{#1}}
\newcommand {\eeeq}{\end{equation}}
\newcommand {\bea}{\begin{eqnarray}}
\newcommand {\eea}{\end{eqnarray}}

\def\texitem#1{\par\smallskip\noindent\hangindent 25pt
               \hbox to 25pt {\hss #1 ~}\ignorespaces}



\usepackage{cite,color}
\usepackage{amsthm,amsmath,amssymb,amsfonts,verbatim}
\usepackage{algorithmic}
\usepackage{graphicx}
\usepackage{textcomp}
%
\def\bm#1{\ensuremath \textcolor{black}{{{\boldsymbol{#1}}}}}
\usepackage{dsfont,threeparttable,mathtools,multirow}

\DeclareMathOperator*{\argmax}{argmax}

\usepackage{array}
\newcommand{\PreserveBackslash}[1]{\let\temp=\\#1\let\\=\temp}
\newcolumntype{C}[1]{>{\PreserveBackslash\centering}p{#1}}
\newcolumntype{R}[1]{>{\PreserveBackslash\raggedleft}p{#1}}
\newcolumntype{L}[1]{>{\PreserveBackslash\raggedright}p{#1}}

\newtheorem{theorem}{Theorem}
\newtheorem{lemma}{Lemma}
\newtheorem{definition}{Definition}

\newtheorem{proposition}{Proposition}
\newtheorem{corollary}{Corollary}
\newtheorem{assumption}{Assumption}

\newcommand{\tabincell}[2]{\begin{tabular}{@{}#1@{}}#2\end{tabular}}
\newcommand\mtiny[1]{\mbox{\tiny\ensuremath{#1}}}
\newcommand{\ic}[1]{{\color{black}#1}}
\newcommand{\uss}[1]{{\color{black}#1}}
\linespread{1.5}
\allowdisplaybreaks

\begin{document}
\title{Variance-Reduced Splitting Schemes for Monotone Stochastic Generalized Equations}
\author{Shisheng Cui  and Uday V. Shanbhag
\thanks{(\ic{To be modified})This paragraph of the first footnote will contain the date on 
which you submitted your paper for review. It will also contain support 
information, including sponsor and financial support acknowledgment. For 
example, ``This work was supported in part by the U.S. Department of 
Commerce under Grant BS123456.'' }
\thanks{Shisheng Cui and Uday V. Shanbhag are with the Department of Industrial and Manufacturing Engineering, The Pennsylvania State University, University Park, PA 16802 USA (e-mail: suc256@psu.edu; udaybag@psu.edu). }}

\maketitle

\begin{abstract}
We consider monotone inclusion problems where the operators may be
expectation-valued, a class of problems that subsumes convex stochastic
optimization problems as well as subclasses of stochastic variational
inequality and equilibrium problems. A direct application of splitting
schemes is complicated by the need to resolve problems with
expectation-valued maps at each step, a concern that is addressed by
using sampling. Accordingly, we propose an avenue for addressing
uncertainty in the mapping: {\em Variance-reduced stochastic modified
forward-backward splitting scheme} ({\bf vr-SMFBS}). In constrained
settings, we consider structured settings when the map can be
decomposed into an expectation-valued map $A$ and a maximal monotone
map $B$ with a tractable resolvent.  We show that the proposed schemes
are equipped with a.s. convergence guarantees, linear (strongly
monotone $A$) and $\mathcal{O}(1/k)$ (monotone $A$) rates of
convergence while achieving optimal oracle complexity bounds. 
The rate statements in monotone regimes appear to be amongst the first
and rely on leveraging the Fitzpatrick gap function for monotone
inclusions. Furthermore, the schemes rely on weaker moment requirements
on noise and allow for weakening unbiasedness requirements on
oracles in strongly monotone regimes. Preliminary numerics on a class
of two-stage stochastic variational inequality problems reflect these
findings and show that the variance-reduced schemes outperform
stochastic approximation schemes and sample-average approximation
approaches. The benefits of attaining deterministic
rates of convergence become even more salient when resolvent
computation is expensive.
\end{abstract}


\section{Introduction}
\label{sec:introduction}
\IEEEPARstart{T}{he} generalized equation (\uss{alternately referred to as the inclusion problem}) represents a crucial
mathematical object in decision and control theory, representing
a set-valued generalization to the more standard root-finding problem
which requires solving $F(x) = 0$, where  $F: \mathbb{R}^n \to \mathbb{R}^n$ \uss{is single-valued}.  Specifically, if $T$ is
a set-valued map, defined as $T: \mathbb{R}^n \rightrightarrows
\mathbb{R}^n$, and $T$ is characterized by a distinct structure in that
it can be cast as the sum of two operators $A$ and $B$, then the generalized equation (GE) takes the form 
\begin{align*}\tag{GE} 0 \in T(x) \triangleq A(x)+B(x).
\end{align*}
Here $A: \mathbb{R}^n \to \mathbb{R}^n$ is a single-valued map and $B: \mathbb{R}^n \rightrightarrows \mathbb{R}^n$ is a set-valued map.
While such objects have a storied history, an excellent overview was first
provided by Robinson~\cite{robinson83generalized}. Generalized equations have
been extensively examined since the 70s when
Rockafellar~\cite{rockafellar1976monotone} developed a proximal point scheme
for a generalized equation characterized by monotone operators. In fact, this
scheme subsumes a range of well known schemes such as the augmented Lagrangian
method~\cite{glowinski1989augmented}, Douglas-Rachford
splitting~\cite{douglas1956numerical}, amongst others. It can be observed that a large class of optimization and equilibrium problems can be modeled as \eqref{GE},
including the necessary conditions of nonlinear programming problems,
variational inequality and complementarity problems, and a broad range of
equilibrium problems (cf.~\cite{robinson83generalized}). 
Under
suitable requirements on $A$ and $B$, a range of splitting
methods can be developed and has represented a vibrant area of
research over the last two
decades~\cite{douglas1956numerical,peaceman1955numerical,lions1979splitting,passty1979ergodic}.

In this paper, we
consider addressing the {\em stochastic} counterpart of generalized equations,
a class of problems that has seen recent study via sample-average
approximation (SAA) techniques~\cite{chen19convergence}.  Formally, the
stochastic generalized equation requires an $x \in\mathbb{R}^n$
such that 
\begin{align} \tag{SGE}
0 \in \mathbb{E}[T(x,\xi(\omega))] \triangleq \mathbb{E}[A(x,\xi(\omega))] + B(x), 
\end{align}
where the components of the map $A$ are denoted by $A_i$, $i=1,\dots,n$, $\xi:\Omega \to \mathbb{R}^d$ is a random variable, $A_i:
\mathbb{R}^n \times \Omega \rightrightarrows \mathbb{R}^n$ is a set-valued map, $\mathbb{E}[\cdot]$ denotes the expectation, and the associated
probability space is given by
$(\Omega, {\cal F}, \mathbb{P})$. In the remainder of this paper, we refer to $A(x,\xi(\omega))$ by $A(x,\omega)$. The expectation of a set-valued map leverages the Aumann integral~\cite{aumann1965integrals} and is formally defined as
$\mathbb{E}[A_i(x,\xi(\omega))]= \left\{ \int v_i(\omega)dP(\omega)\mid \quad v_i(\omega)\in A_i(x,\xi(\omega)) \right\}.$
Consequently, the expectation $\mathbb{E}[A(x,\omega)]$ can be defined as a Cartesian product of the sets $\mathbb{E}[A_i(x,\omega)]$, defined as 
$\mathbb{E}[A(x,\omega)]  \ \triangleq \ \prod_{i=1}^n\mathbb{E}[A_i(x,\omega)].$ 
We motivate (SGE) by considering some examples.  Consider
the stochastic convex optimization
problem~\cite{dantzig2010linear,birge2011introduction,shapiro2014lectures} given by
${\displaystyle \min_{x \in \mathcal{X}}} \, \mathbb{E}[g(x,\omega)],$ where
$g(\bullet,\omega)$ is a differentiable convex function for every $\omega$
and $\mathcal{X}$ is a closed and convex set. Such a problem can be
equivalently stated as $0 \in T(x) \triangleq
\mathbb{E}[G(x,\omega)] + \mathcal{N}_\mathcal{X}(x)$, where $G(x,\omega) = \nabla g(x,\omega)$ and $\mathcal{N}_\mathcal{X}(x)$ denotes the normal cone of $\mathcal{X}$ at $x$. In
fact, the
single-valued
stochastic variational inequality problems~\cite{jiang08stochastic,juditsky2011solving,shanbhag2013stochastic} can be cast as stochastic
inclusions as well as seen by $0 \in T(x) \triangleq
\mathbb{E}[F(x,\omega)] + \mathcal{N}_\mathcal{X}(x)$, where
$F(\bullet,\omega)$ is a realization of the mapping. This introduces a
pathway for examining stochastic analogs of traffic
equilibrium~\cite{ravat17existence} and Nash equilibrium
problems~\cite{ravat11characterization} as well as a myriad of other
problems subsumed by variational inequality
problems~\cite{facchinei2007finite}. We describe two problems that have
seen recent study which can also be modeled as SGEs, allowing for
developing new computational techniques.

\subsection{Two motivating examples} \label{sec:app}
\noindent {\em (a) A subclass of stochastic multi-leader multi-follower games.} 
Consider a class of multi-leader
multi-follower games~\cite{sherali1984multiple,pang05quasi,su2007analysis,kulkarni15existence} with ${\bf N}$ 
leaders, denoted by $\{1, \cdots, {\bf N}\}$ and
$M$ followers, given by $\{1, \cdots, {\bf M}\}.$ In
general, this class of games is challenging to analyze since the player
problems are nonconvex and early existence statements have relied on
eliminating follower-level decisions, leading to a noncooperative game with convex nonsmooth
player problems. Adopting a similar approach in examining a stochastic
generalization of a quadratic setting examined
in~\cite{steffensen18quadratic} with a single follower where ${\bf M} = 1$, suppose the follower problem is 
\begin{align}\tag{\mbox{Follow$_i(x_i)$}}
	\min_{y_i \, \geq \, l_i(x_i)} & \quad \tfrac{1}{2} y_i^\mathsf{T} Q_i y_i - b_i(x_i)^\mathsf{T}  y_i,\end{align}
where $Q_i$ is a positive definite and diagonal matrix, $b_i(\bullet)$ and $l_i(\bullet)$ are affine functions.
Suppose the leaders compete in a Cournot game in which the $i$th leader  solves 
\begin{align}\tag{\mbox{Leader$_i(x_{-i})$}}
	\min_{x_i \, \in \, \mathcal{X}_i} & \quad c_i(x_i) - \mathbb{E}[p(X,\omega) x_i] + a_iy_i(x_i), 
\end{align}
where $c_i: \mathcal{X}_i \to \mathbb{R}$ is a smooth convex function, the
inverse-demand function $p(\bullet)$ is defined as $p(X)
\triangleq d(\omega)-r(\omega)X$, $d(\omega), r(\omega) > 0$ for every $\omega \in \Omega$, $X
\triangleq \sum_{i=1}^Nx_i$, $y_i(x_i)$ denotes a
best-response of follower $i$, and $\mathcal{X}_i$ is a
closed and convex set in $\mathbb{R}$. Follower $i$'s 
best-response $y_i(x)$, given leader-level decisions $x$, can be derived by considering the necessary
and sufficient conditions of optimality: \begin{align*}
	y_i(x_i) = \max\{ Q_i^{-1} b_i(x_i), l_i(x_i)\}.  
\end{align*}
Consequently, we may eliminate the follower-level decision in the leader level problem, leading to a nonsmooth stochastic Nash equilibrium problem given by the following:
\begin{align}
\tag{\mbox{Leader$_i(x_{-i})$}}
\begin{aligned}
	\min_{x_i \in \mathcal{X}_i} & c_i(x_i) - \mathbb{E}[p(X,\omega)x_i] \\
	& + {a_i^\mathsf{T}  \max\{Q_i^{-1}b_i(x_i), l_i(x_i)\}}. 
\end{aligned}
\end{align}
Under convexity of $b_i(\bullet)$ and $l_i(\bullet)$, and suitable assumptions on $Q_i$ and $a_i$, the expression $a_i^\mathsf{T}  \max\{Q_i^{-1}b_i(x_i), l_i(x_i)\}$ is a convex function in $x_i$, a fact that follows from observing that this term is a scaling of the maximum of two convex functions. Consequently, the necessary and sufficient equilibrium conditions of this game are given by 
$0 \in \nabla_{x_i} c_i(x_i) + \mathbb{E}[r(\omega)(X+x_i)-d(\omega)]  + \partial_{x_i} h_i(x_i) + \mathcal{N}_{\mathcal{X}_i}(x_i)$ for $i = 1, \hdots, {\bf N}$ 
where $h_i(\bullet)$, defined as $h_i(x_i) \triangleq a_i^\mathsf{T}  \max\{Q_i^{-1}b_i(x_i), l_i(x_i)\}$,  is a convex function in $x_i$. Then the necessary and sufficient equilibrium conditions are given by
\begin{align}\tag{SGE$_{\rm mlf}$}
	0 &\in T(x) \triangleq A(x)+B(x), \\ \mbox{ where } A(x) &\triangleq  G(x) + R(x),  B(x) \triangleq D(x) + \mathcal{N}_{\mathcal X}(x).
\notag
\end{align}
Here $G(x) \triangleq (c_i'(x_i))_{i=1}^N$, $R(x) \triangleq \mathbb{E}[r(\omega)(X{\bf 1}+x) -d(\omega) {\bf 1}] $ and
$D(x) \triangleq \mathbb{E}[\partial_{x_i} h_i(x_i)]$. 
We observe that $G$ is a monotone map while $D(x)$ is the Cartesian product of the expectations of subdifferentials of convex functions, implying that $D$ is also monotone. Furthermore, $R$ is monotone since $\nabla_x R (x) = \mathbb{E}[r(\omega)(\mathbf{I}+{\bf 1}{\bf 1}^\mathsf{T} )]\succeq 0.$ Since $\mathcal{N}_{\mathcal X}$ is a normal cone of a convex set, it is also a monotone map, implying that $T$ is monotone.   

\noindent {\em (b) Model predictive control (MPC) with probabilistic and risk constraints.}
{Model-predictive control (MPC) is a framework for the
control of complex systems~\cite{mpc1}. It obviates  the challenging
derivation/computation of a feedback control law with repeated resolution
of a finite-horizon constrained optimization problem. Contending with uncertainty has prompted the
development of several approaches: (i) {\em Robust approaches.} Robust frameworks for MPC~\cite{mpc2,mpc3,mpc4,cl1,cl2,cl3} often require 
bounded and deterministic descriptions of uncertainty, a property inherited
from robust optimization~\cite{ben2009robust}; (ii) {\em Probabilistic framework.}  Under a probabilistic representation of
the uncertainty, chance-constrained MPC framework~\cite{smpc1,smpc2,smpc4,smpc5,smpc6,smpc7} can be
adopted, allowing for shaping the probability distribution of
system states. Such
avenues have assumed relevance in settings such as climate
control, process control, power systems operation, and vehicle path planning
(cf.~\cite{smpcreview} for an excellent survey). 
}
Suppose the dynamics are captured by a linear discrete-time system,  defined as  
\begin{equation} \label{eq:linear_system}
    \x_{t+1}=A(\delta) \x_t + B_u(\delta) \u_t + B_w(\delta) \w_t,
\end{equation}
where $\x_0$ is given, $\x_t\in \Xscr$ denotes the state of the
system at time $k$, $\u_t \in \Uscr \subseteq \Real^m$ represents the
control input vector at time $t$, and $\w_t \in \Real^p$ is an unmeasurable
disturbance signal at time $t$. In addition, $\Xscr$ and $\Uscr$ represent the set of
states and controls, respectively while the random matrices $A(\delta) , B_u
(\delta), B_w(\delta)$ lie in $\Real^{n \times n}, \Real^{n \times m}$, and
$\Real^{n \times p}$, respectively. We assume access to the distributions
governing $\delta$ and $\w_k$.  Suppose $\bm{\pi} \triangleq \{\pi_0(\cdot), \cdots,
\pi_{N-1}(\cdot)\}$ represents feedback-control policy where $\pi_i: \Real^n
\to \mathcal{U} \subseteq \Real^m$ denotes the state feedback control law for
$i = 0,1, \cdots, N-1$. We may then formally define the value function
$\mathcal{V}_N$ as   $\mathcal{V}_N(\x,\bm{\pi}) \triangleq
\mathbb{E}_{\x_0}\left[ \sum_{i=1}^N J(\x_i,\u_i) + J_N(\x_N)\right]$ where $\mathbb{E}_{\x_0}[\bullet] \triangleq \mathbb{E}[\bullet \mid \x_0].$ In addition,
    suppose $\Xscr_c$ denotes a set of undesirable outcomes. The resulting chance-constrained stochastic control problem requires determining the feedback-control law $\bm{\pi}$ that minimizes $\mathcal{V}_N(\x,\bm{\pi})$ subject to the prescribed dynamics and probabilistic requirements on the state. This problem is challenging, motivating the construction of a finite-horizon open-loop counterpart. To this end, we define $\x_{t,T}$ and
$\u_{t,T}$ as $\x_{t,T} \triangleq \{\x_t,\cdots,\x_{t+T-1)}\}$ and $\u_{t,T} \triangleq \{\u_t,\cdots,\u_{t+T-1)}\}$, respectively while the finite-horizon value function at the $t$-th step looking $T$ periods ahead,  denoted by $\mathcal{V}_{t,T}(\x_{t,T}, \u_{t,T})$, is defined as $\mathcal{V}_{t,T}(\x_{t,T},\u_{t,T}) \triangleq
\mathbb{E}_{\x_t}\left[ \sum_{i=t+1}^T J(\x_i,\u_i)\right]$ where $T \ll N$. Given a horizon $T$, the resulting  
MPC framework~\cite{camacho2013model} requires minimizing  $\mathcal{V}_{t,T}(\x_{t,T}, \u_{t,T})$ subject to the prescribed dynamics and the probabilistic state-constraints, given $\x_t$.  A formal definition of the chance-constrained stochastic control problem ({\bf CC-SC}) and its finite-horizon counterpart ({\bf CC-MPC$_{t}^T$}) is  provided next. 

    {\tiny 
\begin{align*} 
\fbox{$\begin{aligned}
        & \ \mbox{\bf (CC-SC)} \max_{\bm{\pi},\x}  \  \mathcal{V}_N(\x,\bm{\pi}) \\
\mbox{s.t.} & \ \mathbb{P}_{\x_0}\left\{ \omega :  \x_{i} \in  \Xscr_c, \forall i \in [1;N]  \right\} \leq \epsilon \\
    & \ \text{ where $\x_k$ evolves as per  \eqref{eq:linear_system}}. 
\end{aligned}$} \  
\fbox{$\begin{aligned} & \, \mbox{\bf (CC-MPC$_{t}^T$)} \ 
        \max_{\tiny \u_{t,T},\x_{t,T}}  \  \mathcal{V}_{t,T}(\x_{t,T},u_{t,T}) \\
\mbox{s.t.} & \ \mathbb{P}_{\x_t}\left\{ \omega :  \x_{i} \in  \Xscr_c, \forall i\in [k;T]  \right\} \leq \epsilon \\
    & \ \text{ where $\x_t$ evolves as per \eqref{eq:linear_system}} 
\end{aligned}$}.
\end{align*}
}
The control decision $\u_t$ is obtained from resolving ({\bf CC-MPC$_{t,T}$}) and is  
then applied to the system {after which the window is moved ahead. The
resulting  problem ({\bf CC-MPC$_{t+1,T}$}) is then 
resolved} when $t+T < N$ (alternately, the horizon $T$ is reduced appropriately). {This formulation is relatively flexbile and
and can be used to address diverse types of objectives and constraints.} \uss{In general, the problem (CC-MPC$_t^T$) is challenging, owing to the presence of the chance constraint. The probability function can be recast as an expectation of an indicator function over a set but this leads to discontinuous integrands. Recently, the second author has developed avenues where under prescribed assumptions under which the following holds~\cite{bardakci2018probability}. 
\begin{align}\label{equiv-P}
\mathbb{P}\left[\zeta \in \mathcal{K} \mid \zeta \in {\bf K}(\x)\right] = \mathbb{E}_{\xi}\left[ F(\x,\xi)\right], 
\end{align}
where $\mathcal{K}$ is a set in $\Real^n$ symmetric about the origin, ${\bf K}(\x)$ is defined as 
${\bf K}(\x)\triangleq \{\zeta :c(\x,\zeta) \geq 0\}$, $T\in\Real^{d\times n}$, and    
\begin{align*}
c(\x,\zeta)\triangleq \left\{
\begin{aligned}
&1-|\zeta^\mathsf{T}\x|^m, \quad \mbox{Setting A} \\
&T\x-\zeta. \quad \qquad \mbox{Setting B}
\end{aligned} \right.
\end{align*}}
\uss{The integrand $F(\bullet,\xi)$ is defined appropriately in Settings A and B where
in each case, it is shown that $\xi$. In fact, we can then show that a
composition of $\mathbb{E}[F(\bullet,\xi)]$ is convex; e.g. in Setting A,
$1/\mathbb{E}[F(\bullet,\xi)]$ is convex. For expository ease, we may recast  (CC-MPC$_t^T$) as the following chance-constrained problem (CCP) and provide its necessary and sufficient optimality conditions in (SGE$_{\rm ccp}$).  

    {\scriptsize 
\begin{align*} 
\fbox{$\begin{aligned}
        & \ \mbox{\bf (CCP)} \min_{\x}  \mathbb{E}_{\omega}[h(\x,\omega)]\\
& \mbox{s.t.}  \ g(\x) \leq 0,   \quad (\lambda)  
\end{aligned}$} \quad \overset{\mbox{(Reg. conds)}}{\equiv} \quad   
\fbox{$\begin{aligned} & \, \mbox{\bf (SGE$_{\rm ccp}$)} \quad \mbox{Find $\z\triangleq (x,\lambda)$ such that} \\ 
& 0 \in T(\z) \triangleq H(\z)+\mathcal{N}_\mathcal{Z}(\z),
\end{aligned}$}.
\end{align*}
}
where $g(\x) \triangleq \tfrac{1}{\mathbb{E}_{\xi}[F(\x,\xi)]} - \tfrac{1}{(1-\epsilon)}$,  $\mathcal{Z}\triangleq \mathcal{X}\times\Real^+_m$, $\mathcal{N}_\mathcal{Z}(z)$ denotes normal cone of $\mathcal{Z}$ at $z$, $H$ is a monotone set-valued map defined as
\begin{align*}
H(\x,\lambda)\triangleq \left\{\nabla_x f(\x)+\lambda\partial_xg(\x)\right\} \times \{ g(\x) \}.
\end{align*}}

We close by noting that monotone inclusions with expectation-valued operators are of crucial relevance in decision and control problems, providing a strong motivation for addressing their tractable resolution. 
\subsection{Related work.} We provide a brief review of prior research.
\begin{table}
\begin{center}
\begin{threeparttable} 
\caption{Variance-reduced vs Stochastic operator-splitting schemes for SGEs}
\label{tab:split}
\tiny
	\begin{tabular}[t]{  c | C{.8cm}  | c | c | c  } 
	\hline Alg/Prob. & $A, B$ & Biased & $\gamma_k$; $N_k$ & Statements  \\ \hline 
	\cite{rosasco2016stochastic} &  $L_A$, MM &  N  &   SS, NS; 1 & {\tiny \tabincell{l}{$x_k \xrightarrow[a.s]{k \to \infty} x^*\in \mathcal{X}^*$}}  \\   
\hline
	\cite{rosasco2016stochastic} &  $L_A$, $\sigma_A$, $\mu_B$  &  N &   SS, NS; 1 & {\tiny \tabincell{l}{$\mathbb{E}[\|x_k-x^*\|^2] \leq \mathcal{O}(1/k)$}}  \\   
\hline
	\cite{combettes2016stochastic} &  $L_A$,  MM  &  N   &   SS, NS; 1 & {\tiny \tabincell{l}{$x_k \xrightarrow[a.s]{k \to \infty} x^*\in \mathcal{X}^*$}}  \\  
\hline
 {\bf (vr-SMFBS)} &  $\sigma_A$, MM & Y & \tabincell{c}{$\gamma$;$\lfloor \rho^{-(k+1)}\rfloor$\\ $\rho < 1$}   & {\tiny \tabincell{l}{$x_k \xrightarrow[a.s]{k \to \infty} x^*$ \\ $\mathbb{E}[\|x_k-x^*\|^2] \leq \mathcal{O}(q^k)$\\
 Sample-complexity: $\mathcal{O}(1/\epsilon)$}} \\
\hline 
 {\bf (vr-SMFBS)} &  $L_A$, MM & N  &  \tabincell{c}{$\gamma$;$\lfloor k^{a}\rfloor$\\ $a > 1$}   & {\tiny \tabincell{l}{$x_k \xrightarrow[a.s]{k \to \infty} x^* \in \mathcal{X}^*$ \\  $\mathbb{E}[G(\bar{x}_k)] \leq \mathcal{O}(1/k)$\\
 Sample-complexity: $\mathcal{O}(1/\epsilon^{2+\delta})$}}  \\
 \hline
	\end{tabular}

    \begin{tablenotes}
    \centering
\item {\scriptsize \texttt{$L_A$: Lipschitz constant of $A$, MM: Maximal monotone}}
\item {\scriptsize \texttt{$\sigma_A,\mu_B$: strong monotonicity constants of $A$ and $B$}} 
\item {\scriptsize \texttt{SS: square-summable, NS: non-summable}}
    \end{tablenotes}
  \end{threeparttable}
\vspace{-0.3in}
\end{center}
\end{table}

\noindent {\em (a) Stochastic operator splitting schemes.} 
The regime where the maps are expectation-valued has seen
relatively less study~\cite{ruszczynski1997decomposition}.
Stochastic proximal gradient
schemes~\cite{scholschmidt2011convergence,ghadimi15,jalilzadeh2018smoothed,jofre2019variance,rosasco2019convergence}
are an instance of stochastic operator splitting techniques where
$A$ is either the gradient or the subdifferential operator. In the context of monotone inclusions, when
$A$ is a more general monotone operator, a.s. convergence of the
iterates has been proven in ~\cite{combettes2016stochastic}  and
~\cite{rosasco2016stochastic} when $A$ is Lipschitz and
expectation-valued while $B$ is maximal monotone. In fact, in our prior work~\cite{cui16analysis}, we prove a.s. convergence and derive an optimal rate in terms of the gap function in the context of stochastic variational inequality problems with structured operators. Stability analysis~\cite{yaji20analysis} and two-timescale variants~\cite{yaji20stochastic} have also been examined. A rate
statement of $\mathcal{O}(\tfrac{1}{k})$ in terms of mean-squared
error has also been provided when $A$ is additionally strongly
monotone in~\cite{rosasco2016stochastic}. A more general problem which finds a
zero of the sum of three maximally monotone operators is proposed
in~\cite{cevher2018stochastic}. A comparison of rate statements for stochastic
operator-splitting schemes is provided in Table~\ref{tab:split} from which
we note that ({\bf vr-SMBFS}) is equipped with deterministic (optimal) rate
statements, optimal or near-optimal sample-complexity, a.s. convergence
guarantees, and does not require imposing a conditional unbiasedness
requirement on the oracle in the strongly monotone regime. We believe our rate
statements are amongst the first in maximal monotone settings (to the best
of our knowledge).

\noindent {\em (b) Other related schemes.} A natural approach for
resolving SGEs is via sample-average
approximation~\cite{chen2012stochastic,shapiro2008stochastic}. It
proves the weak a.s. convergence and establishes the rate
of convergence in expectation under strong monotonicity
assumption.

\subsection{Gaps and resolution.}
\begin{enumerate}

    \item[(i)] {\em Poorer empirical performance when resolvents are costly.}
Deterministic schemes for strongly monotone and monotone generalized equations
display linear and $\mathcal{O}(1/k)$ rate in resolvent operations
while stochastic analogs display rates of $\mathcal{O}(1/k)$ and
$\mathcal{O}(1/\sqrt{k})$, respectively. This leads to far poorer
practical behavior particularly when the resolvent is challenging to compute,
e.g., in strongly monotone regimes, the complexity in resolvent operations can increase
from $\mathcal{O}(\log(1/\epsilon))$ to $\mathcal{O}(1/\epsilon)$. The proposed scheme ({\bf vr-SMBFS}) achieve
deterministic rates of convergence with either identical or
slightly worse oracle complexities in both monotone and
strongly monotone regimes, allowing for run-times comparable to deterministic counterpart.

\item[(ii)] {\em Absence of rate
statements for monotone operators.} To the
best of our knowledge, there appear to be no non-asymptotic
rate statements available in monotone regimes. In ({\bf vr-SMBFS}), rate statements are now provided.

\item[(iii)]  {\em
Biased oracles.} In many settings, conditional unbiasedness
of the oracle may be harder to impose and one may need to
impose weaker assumptions.  Our proposed scheme allows for
possibly biased oracles in some select settings.

\item[(iv)] {\em State-dependent bounds
on subgradients and second moments.} Many subgradient and
stochastic approximation schemes impose bounds of the form
$\mathbb{E}[\|G(x,\omega)\|^2] \leq M^2$ where $G(x,\omega)
\in T(x,\omega)$ or $\mathbb{E}[\|w\|^2 \mid x]
\leq \nu^2$ where $w = \nabla_x f(x,\omega) - \nabla_x
f(x)$. Both sets of assumptions are often challenging to
impose non-compact regimes. Our scheme can accommodate state-dependent
bounds to allow for non-compact domains.

\end{enumerate}

\subsection{Outline and contributions.}   
We now articulate our contributions. 
In
Section~\ref{sec:split}, we consider the resolution of monotone inclusions in structured regimes where
the map can be expressed as the sum of two maps, facilitating the
use of splitting. In this context, when one of
the maps is expectation-valued while the other has a cheap
resolvent, we consider a scheme where a sample-average of the
expectation-valued map is utilized in the forward step. When the sample-size is increased at a suitable rate, the sequence of iterates is shown to converge a.s. to a solution of the constrained stochastic generalized equation in both monotone and strongly monotone regimes. In addition,  the
resulting sequence of iterates converges either at a linear rate (strongly
monotone) or at a rate of $\mathcal{O}(1/k)$ (maximal monotone), leading
to optimal oracle complexities of $\mathcal{O}(1/\epsilon)$ and
$\mathcal{O}(1/\epsilon^{2+\delta})$ ($\delta>0$), respectively. Notably, the strong monotonicity claim is made without an unbiasedness requirement on the oracle while weaker state-dependent noise requirements are assumed througout. Rate statements in maximally monotone regimes rely on using the Fitzpatrick gap function for inclusion problems. We believe that the rate statements in monotone regimes are amongst the first.
In addition, we provide some background in Section~\ref{sec:back} while preliminary numerics are presented in Section~\ref{sec:numerics}. 

\subsection{Comments on variance-reduced schemes.}   
Before proceeding, we briefly digress regarding the term {\em variance-reduced.}  

\noindent (i) {\em Terminology and applicability.} The moniker
``variance-reduced'' reflects the usage of increasing accurate approximations
of the expectation-valued map, as opposed to noisy sampled variants that are
used in single sample schemes. The resulting schemes are often referred to as
{\em mini-batch} SA schemes and often achieve deterministic rates of
convergence. This work draws inspiration from the early work by Friedlander and
Schmidt~\cite{FriedlanderSchmidt2012} and Byrd et al.~\cite{byrd12} which demonstrated how increasing
sample-sizes can enable achieving deterministic rates of convergence. Similar
findings regarding the nature of sampling rates have been presented
in~\cite{pasupathy2018sampling}. This avenue has proven particularly useful in
developing accelerated gradient schemes for smooth~\cite{jofre2019variance,ghadimi2016mini} and nonsmooth~\cite{jalilzadeh2018smoothed} convex/nonconvex stochastic optimization, variance-reduced quasi-Newton schemes~\cite{raghu17,jalilzadeh20variable}, amongst  others.  Schemes such as
SVRG~\cite{NIPS2013_ac1dd209} and SAGA~\cite{NIPS2014_ede7e2b6} also achieve
deterministic rates of convergence but are customized for finite sum problems
unlike mini-batch schemes that can process expectations over general
probability spaces.  Unlike in mini-batch schemes where increasing batch-sizes
are employed, in schemes such as SVRG, the entire set of samples is
periodically employed for computing a step. 

\noindent (ii) {\em Weaker assumptions and stronger statements.} The proposed
variance-reduced framework has several crucial benefits that can often not be reaped
in the single-sample regime. For instance, rate statements are derived in monotone regimes which have hitherto been unavailable. Second, a.s. convergence guarantees are obtained and in some cases require far weaker moment assumptions. Finally, since the schemes allow for deterministic rates, this leads to far better practical behavior as the numerics reveal. 

\noindent (iii) {\em Sampling requirements.} Naturally, variance-reduced
schemes can generally be employed only when sampling is relatively cheap compared to the main
computational step (such as computing a projection or a prox.) In terms of
overall sample-complexity, the proposed schemes are near optimal. As $k$ becomes large,
one might question how one might contend with $N_k$ tending to $+\infty$. This
issue does not arise since most schemes of this form are meant to provide
$\epsilon$-approximations. For instance, if $\epsilon = 10^{-3}$, then such a scheme
requires approximately $\mathcal{O}(10^{3})$ steps (in monotone settings). Since $N_k \approx \lfloor
k^a \rfloor$ and $a > 1$, we require approximately $(\mathcal{O}(10^3))^{1+a}$
samples in total. In a setting where multi-core architecture is ubiquitous, such
requirements are not terribly onerous particularly since computational costs
have been reduced from $\mathcal{O}(10^6)$ (single-sample) to
$\mathcal{O}(10^3)$. It is worth noting that finite-sum problems routinely have $10^9$ or more samples and competing schemes such as SVRG would require taking the full batch-size intermittently, which means they use $\mathcal{O}(10^9)$ samples at least to achieve the same accuracy as our scheme.

\section{Background}\label{sec:back}
In this section, we provide some background on splitting schemes, building a foundation for the
subsequent sections.  \\
Consider the generalized equation 
\begin{align}\tag{GE}
 0 \in T(x) \triangleq A(x)+B(x). \label{GE}
\end{align} 
If
the resolvent of either $A$ or $B$ (or both) is tractable,
then splitting schemes assume relevance. Notable instances include Douglas-Rachford splitting~\cite{douglas1956numerical,lions1979splitting}, Peaceman-Rachford splitting~\cite{peaceman1955numerical,lions1979splitting}, and Forward-Backward splitting (FBS)~\cite{lions1979splitting,passty1979ergodic}.   
\noindent (a) {\em Douglas-Rachford Splitting}~\cite{douglas1956numerical,lions1979splitting}. In this scheme, the resolvent of $A$ and $B$ can be separately evaluated to generate a sequence defined as follows.
\begin{align} \tag{DRS}
\begin{aligned}
x_{k+\frac{1}{2}} &\coloneqq (\mathbf{I}+\gamma A)^{-1}(x_k) \\
x_{k+1} &\coloneqq(\mathbf{I}+\gamma B)^{-1}(2x_{k+\frac{1}{2}}-x_k) + x_k - x_{k+\frac{1}{2}}.
\end{aligned}
\end{align}
\noindent(b) {\em Peaceman-Rachford Splitting}~\cite{peaceman1955numerical,lions1979splitting}. In contrast, in the Peaceman-Rachford splitting method, the the roles of $A$ and $B$ are exchanged in each iteration, given by the following. 
\begin{align} \tag{PRS}
\begin{aligned}
\notag x_{k+\frac{1}{2}} &\coloneqq (\mathbf{I}+\gamma B)^{-1}(\mathbf{I}-\gamma A)(x_k), \\
\notag x_{k+1} &\coloneqq(\mathbf{I}+\gamma A)^{-1}(\mathbf{I}-\gamma B)(x_{k+\frac{1}{2}}).
\end{aligned}
\end{align}
\noindent (c) {\em Forward-backward splitting}~\cite{lions1979splitting,passty1979ergodic}.  Moreover, if the resolvent of $B$ is easier to evaluate and $A$ and $B$ are maximal monotone, the forward-backward splitting method~\cite{lions1979splitting,passty1979ergodic} was applied to convex optimization in \cite{han1988parallel}:
\begin{align} \tag{FBS}
\notag x_{k+1}\coloneqq(\mathbf{I}+\gamma B)^{-1}(\mathbf{I}-\gamma A)(x_k).
\end{align}
In~\cite{combettes2016stochastic}, a 
stochastic variant of the FBS method,  developed for strongly monotone maps, is equipped with a rate of $\mathcal{O}(1/k)$  while in~\cite{rosasco2016stochastic}, maximal monotone regimes are examined and a.s. convergence statements are provided. A drawback of (FBS) is the requirement of either a
strong monotonicity assumption on $A^{-1}$, or that $A$ be Lipschitz continuous on $\mbox{dom}(A)=\mathbb{R}^n$ and $T$ be strongly monotone; this motivated the modified FBS scheme where convergence was proven when $A$ is monotone and Lipschitz~\cite{tseng2000modified}.
\begin{align} \tag{MFBS}
\begin{aligned}
\notag x_{k+\frac{1}{2}}&\coloneqq(\mathbf{I}+\gamma B)^{-1}(\mathbf{I}-\gamma A)(x_k), \\
\notag x_{k+1}&\coloneqq x_{k+\frac{1}{2}}-\gamma (A(x_{k+\frac{1}{2}})-A(x_k)).
\end{aligned}
\end{align}
In Section~\ref{sec:split}, we develop a variance-reduced {stochastic}
MFBS scheme where $A$ is Lipschitz and monotone, $A(x) \triangleq
\mathbb{E}[A(x,\omega)]$, and $B$ is maximal monotone with a tractable
resolvent; we derive linear and sublinear convergence under strongly
monotone and merely monotone $A$, respectively, achieving deterministic rates
of convergence.

\section{Stochastic Modified Forward-Backward Splitting Schemes}\label{sec:split}
In this section we analyze stochastic (operator) splitting
schemes.
In the case where $A(x) \triangleq \mathbb{E}[A(x,\omega)]$, $A: \mathbb{R}^n \times \Omega \to \mathbb{R}^n$, and $B$ has
a cheap resolvent, we 
develop a variance-reduced splitting
framework. In Section~\ref{sec:4.1}, we provide some
background and outline the assumptions and derive
convergence theory for monotone and strongly monotone
settings in Section~\ref{sec:4.2} and ~\ref{sec:4.3},
respectively.

\subsection{Background and assumptions}\label{sec:4.1}
Akin to other settings that employ stochastic approximation, we assume the presence of a stochastic first-order oracle for operator $A$ that produces a sample $A(x,\omega)$ given a vector $x$. Such a sample is core to developing a variance-reduced modified forward-backward
splitting ({\bf vr-SMFBS}) scheme reliant on 
$\tfrac{\sum_{j=1}^{N_k} A(x_k,\omega_{j,k})}{N_k}$ to
approximate $\mathbb{E}[A(x_k,\omega)]$ at iteration $k$. Given an $x_0 \in \mathbb{R}^n$, we
formally define such a scheme next.
\begin{align} \tag{\bf vr-SMFBS}
\begin{aligned}
\notag x_{k+\frac{1}{2}}&\coloneqq(\mathbf{I}+\gamma B)^{-1}(x_k-\gamma A_k), \\
\notag x_{k+1}&\coloneqq x_{k+\frac{1}{2}}-\gamma (A_{k+\frac{1}{2}}-A_k),
\end{aligned}
\end{align}
where $A_k \triangleq \tfrac{\sum_{j=1}^{N_k}A(x_k,\omega_{j,k})}{N_k}$,
$A_{k+\frac{1}{2}} \triangleq
\tfrac{\sum_{j=1}^{N_k}A(x_{k+{1}/{2}},\omega_{j,{k+{1}/{2}}})}{N_k}$ are
estimators of $A(x_k)$ and $A(x_{k+\frac{1}{2}})$, respectively.  We assume the following on operators $A$ and $B$.
\begin{assumption}\label{lip-mon} 
The operator $A$ is single-valued, monotone and $L$-Lipschitz on $\mathbb{R}^n$, i.e., $\forall x, y \in \mathbb{R}^n$, 
$\|A(x)-A(y)\|  \le L\|x-y\|$
 and $(A(x)-A(y))^\mathsf{T}(x-y)  \ge0$; the operator $B$ is maximal monotone on $\mathbb{R}^n$. 
\end{assumption}
\vspace{0.1in}

Suppose $\mathcal{F}_k$ denotes  the history up to iteration $k$, i.e., $\mathcal{F}_0  = \{x_0\}$, 
{\tiny
\begin{align*} 
\mathcal{F}_k  & \triangleq
\left\{x_0,\{A(x_0,\omega_{j,0})\}_{j=1}^{N_0},\{A(x_{\frac{1}{2}},\omega_{j,\frac{1}{2}})\}_{j=1}^{N_0},  \cdots, \{A(x_{k-1},\omega_{j,k-1})\}_{j=1}^{N_{k-1}}, \right. \\
	&  \left.   \{A(x_{k-\frac{1}{2}},\omega_{j,k-\frac{1}{2}})\}_{j=1}^{N_{k-1}}\right\},  
\mathcal{F}_{k+\frac{1}{2}} \triangleq \mathcal{F}_k\cup\{A(x_k,\omega_{j,k})\}_{j=1}^{N_k}.
\end{align*}}

Suppose $w_k\triangleq
A(x_k,\omega_k)-A(x_k)$, $\bar{w}_k\triangleq
\tfrac{\sum_{j=1}^{N_k}(A(x_k,\omega_{j,k})-A(x_k))}{N_k}$,
$w_{k+\frac{1}{2}}\triangleq
A(x_{k+\frac{1}{2}},\omega_{k+\frac{1}{2}})-A(x_{k+\frac{1}{2}})$ and
$\bar{w}_{k+\frac{1}{2}}\triangleq
\tfrac{\sum_{j=1}^{N_k}(A(x_{k+\frac{1}{2}},\omega_{j,{k+\frac{1}{2}}})-A(x_{k+\frac{1}{2}}))}{N_k}$, where $N_k$ denotes the batch-size of samples $A(x,\omega_{j,k})$ at iteration $k$. We impose the following bias and moment assumptions on $\bar{w}_k$ and $\bar{w}_{k+1/2}$. Note that Assumption~\ref{moment}(ii) is weakened in the strongly monotone regime, allowing for biased oracles.

\begin{assumption}\label{moment} 
At iteration $k$, the following hold in an a.s. sense:
(i) The conditional means $\mathbb{E}[w_k\mid\mathcal{F}_k]$ and
$\mathbb{E}[w_{k+\frac{1}{2}}\mid\mathcal{F}_{k+\frac{1}{2}}]$ are zero
for all $k$ in an a.s. sense; (ii) The conditional second moments are bounded 
in an a.s. sense as follows, i.e. there exists $\nu_1, \nu_2$ such that $\mathbb{E}[\|w_k\|^2\mid\mathcal{F}_k]\le\nu_1^2\|x_k\|^2 + \nu_2^2$ and
$\mathbb{E}[\|w_{k+{1}/{2}}\|^2\mid\mathcal{F}_{k+{1}/{2}}]\le\nu_1^2\|x_{k+1/2}\|^2 + \nu_2^2$
for all $k$ in an a.s. sense. 
\end{assumption}

When the feasible set $\mathcal{X}$ is possibly unbounded, the assumption that
the conditional second moment $w_k$ is uniformly bounded
a.s. is often a stringent requirement. Instead, we impose a
state-dependent assumption on $w_k$. We conclude this subsection by defining a residual function for a generalized equation. 
\begin{lemma}[{\bf Residual function for (GE)}] \label{rf} \em Suppose $\gamma > 0$, $T = A+B$, and  
\begin{align*}
r_\gamma(x) \triangleq \|x-(\mathbf{I}+\gamma B)^{-1}(x - \gamma A(x))\|. 
\end{align*}
Then $r_\gamma$ is a residual function for (GE). 
\end{lemma}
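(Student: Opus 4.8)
The plan is to verify the two defining properties of a residual function: that $r_\gamma$ is nonnegative on all of $\mathbb{R}^n$, and that $r_\gamma(x) = 0$ if and only if $x$ solves \eqref{GE}, i.e., $0 \in A(x) + B(x)$. Nonnegativity is immediate, since $r_\gamma(x)$ is the value of a norm. The substance is therefore the equivalence between the vanishing of $r_\gamma$ and the inclusion, which I would obtain by unwinding the definition of the resolvent. Before doing so, I would observe that because $B$ is maximal monotone (Assumption~\ref{lip-mon}) and $\gamma > 0$, Minty's theorem guarantees that the resolvent $J_{\gamma B} \triangleq (\mathbf{I} + \gamma B)^{-1}$ is single-valued with full domain $\mathbb{R}^n$; this ensures that $r_\gamma$ is a well-defined single-valued function and that the fixed-point expression makes sense.

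With well-definedness in hand, note that $r_\gamma(x) = 0$ is equivalent to the fixed-point relation $x = J_{\gamma B}(x - \gamma A(x))$. I would then translate this relation through the defining inclusion of the resolvent. Setting $y \triangleq x - \gamma A(x)$, the identity $x = J_{\gamma B}(y)$ holds exactly when $y \in (\mathbf{I} + \gamma B)(x) = x + \gamma B(x)$, i.e., when $y - x \in \gamma B(x)$. Substituting $y = x - \gamma A(x)$ yields $-\gamma A(x) \in \gamma B(x)$, and dividing by $\gamma > 0$ gives $-A(x) \in B(x)$, which is precisely $0 \in A(x) + B(x)$.

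Every step in this chain is reversible, so the equivalences run in both directions: $r_\gamma(x) = 0 \iff x = J_{\gamma B}(x - \gamma A(x)) \iff 0 \in A(x) + B(x)$. Combined with the nonnegativity of $r_\gamma$, this shows that $r_\gamma$ vanishes exactly on the solution set of \eqref{GE}, establishing that it is a valid residual function. There is no real obstacle here: the result is a direct consequence of the resolvent's defining inclusion, and the only point that genuinely requires care is the appeal to maximal monotonicity (via Minty) to guarantee that $J_{\gamma B}$ is single-valued and everywhere defined, so that the stated expression for $r_\gamma$ is meaningful.
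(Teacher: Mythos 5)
Your proof is correct and follows essentially the same route as the paper's: both arguments reduce $r_\gamma(x)=0$ to the fixed-point relation $x=(\mathbf{I}+\gamma B)^{-1}(x-\gamma A(x))$ and unwind it via $x-\gamma A(x)\in(\mathbf{I}+\gamma B)(x)\Leftrightarrow 0\in A(x)+B(x)$. Your additional remarks on nonnegativity and on single-valuedness of the resolvent via Minty's theorem are sound but are left implicit in the paper.
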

\begin{proof}
By definition, $r_\gamma(x)=0$ if and only if  $x=(\mathbf{I}+\gamma B)^{-1}(x - \gamma A(x)).$ This can be interpreted as follows, leading to the conclusion that $x \in T^{-1}(0)$.\begin{align*}
&x=((\mathbf{I}+\gamma B)^{-1}(x - \gamma A(x)) \\ &\Leftrightarrow x - \gamma A(x) \in (\mathbf{I}+\gamma B)(x) \Leftrightarrow 0\in A(x)+B(x).
\end{align*}
\end{proof}
We conclude this subsection with two lemmas~\cite{polyak1987introduction} crucial for proving claims of almost sure convergence. 
\begin{lemma}\label{robbins} 
Let $v_k$, $u_k$, $\delta_k$, $\psi_k$ be nonnegative random variables adapted to $\sigma$-algebra $\mathcal{F}_k$, and let the following relations hold almost surely:
\begin{align*}
&\mathbb{E}[v_{k+1}\mid\mathcal{F}_k]\leq(1+u_k)v_k-\delta_k+\psi_k, \quad\forall k; \\ &\quad \sum_{k=0}^{\infty}u_k<\infty,\mbox{ and } \sum_{k=0}^{\infty}\psi_k<\infty.
\end{align*}
Then a.s., 
$\lim_{k\to\infty}v_k=v$ and $\sum_{k=0}^{\infty}\delta_k<\infty,$
where $v\ge0$ is a random variable. 
\end{lemma}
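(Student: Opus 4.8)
The plan is to reduce this to the nonnegative supermartingale convergence theorem, after first normalizing away the multiplicative factor $(1+u_k)$ and then localizing to cope with the hypotheses holding only almost surely rather than in $L^1$. Throughout I assume the standing integrability needed for the conditional expectations $\mathbb{E}[v_{k+1}\mid\mathcal{F}_k]$ to be well defined.

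First I would introduce the $\mathcal{F}_{k-1}$-measurable product $\beta_k \triangleq \prod_{j=0}^{k-1}(1+u_j)$, with $\beta_0 = 1$, which is nondecreasing and satisfies $\beta_k \geq 1$; since $\sum_k u_k < \infty$ almost surely, $\beta_k$ increases a.s. to a finite limit $\beta_\infty \in [1,\infty)$. Dividing the governing inequality by $\beta_{k+1} = (1+u_k)\beta_k$ and using that $\beta_{k+1}$ is $\mathcal{F}_k$-measurable yields $\mathbb{E}[v_{k+1}/\beta_{k+1}\mid\mathcal{F}_k] \leq v_k/\beta_k - \delta_k/\beta_{k+1} + \psi_k/\beta_{k+1}$. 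I would then form $Y_k \triangleq v_k/\beta_k + \sum_{j=0}^{k-1}(\delta_j - \psi_j)/\beta_{j+1}$ and check directly from this inequality that $\mathbb{E}[Y_{k+1}\mid\mathcal{F}_k]\leq Y_k$, so $Y_k$ is a supermartingale. Because $\beta_{j+1}\geq 1$ and the $\psi_j$ are nonnegative, $Y_k$ is bounded below by $-\sum_{j=0}^{\infty}\psi_j$, which is a.s. finite by hypothesis.

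The main obstacle is precisely that this lower bound is only finite almost surely, not in expectation, so the convergence theorem cannot be invoked on $Y_k$ directly. I would resolve this by a localization argument: for each integer $m$ set $\sigma_m \triangleq \inf\{k : \beta_{k+1} \geq m \mbox{ or } \sum_{j=0}^{k}\psi_j \geq m\}$, which is a stopping time since $\beta_{k+1}$ and $\psi_j$ are $\mathcal{F}_k$-measurable. By the choice of $\sigma_m$ one has $Y_{k\wedge\sigma_m}\geq -\sum_{j=0}^{(k\wedge\sigma_m)-1}\psi_j > -m$, so $Y_{k\wedge\sigma_m} + m$ is a nonnegative supermartingale and hence converges a.s. On the probability-one event $\{\sum_k u_k < \infty,\ \sum_k\psi_k<\infty\}$ we have $\sigma_m\to\infty$ as $m\to\infty$, and therefore $Y_k$ itself converges a.s.

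Finally I would extract both conclusions from the convergence of $Y_k$. Since $\sum_j \psi_j/\beta_{j+1}\leq\sum_j\psi_j<\infty$ a.s., the quantity $v_k/\beta_k + \sum_{j<k}\delta_j/\beta_{j+1}$ converges; being the sum of the nonnegative term $v_k/\beta_k$ and the nondecreasing term $\sum_{j<k}\delta_j/\beta_{j+1}$, the latter is bounded, so $\sum_j \delta_j/\beta_{j+1}<\infty$ and hence $\sum_j\delta_j\leq\beta_\infty\sum_j\delta_j/\beta_{j+1}<\infty$ a.s. It then follows that $v_k/\beta_k$ converges, and since $\beta_k\to\beta_\infty\in[1,\infty)$, the sequence $v_k$ converges a.s. to a nonnegative limit $v$, which is exactly the assertion.
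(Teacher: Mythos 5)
The paper does not prove this statement at all: it is the classical Robbins--Siegmund almost-supermartingale convergence lemma, which the authors simply quote with a citation to Polyak's book. Your argument is a correct and complete rendition of the standard proof of that result --- normalizing by the product $\beta_k=\prod_{j<k}(1+u_j)$ to remove the multiplicative factor, assembling the supermartingale $Y_k$, localizing with the stopping times $\sigma_m$ to convert the almost-sure lower bound into a genuine nonnegative supermartingale, and then reading off $\sum_k\delta_k<\infty$ and the convergence of $v_k$ from the convergence of $Y_k$ together with $\beta_k\to\beta_\infty<\infty$. Each step checks out: $\beta_{k+1}$ is $\mathcal{F}_k$-measurable so the division is legitimate, the stopped process is bounded below by $-m$ by construction, and the monotonicity argument separating $\sum_{j<k}\delta_j/\beta_{j+1}$ from $v_k/\beta_k$ is the right way to extract both conclusions. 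The only caveat, which you flagged yourself, is the standing integrability of $v_0$ (so that $\mathbb{E}[Y_{k\wedge\sigma_m}]$ is finite and the nonnegative supermartingale convergence theorem applies); this can be removed by an additional localization on $\{v_0\le C\}$ if one insists on the statement exactly as written. Since the paper supplies no proof to compare against, there is nothing further to reconcile.
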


\begin{lemma}\label{as-recur} \em Consider a sequence $v_k$ of nonnegative random variables adapted to the $\sigma$-algebra $\mathcal{F}_k$ and satisfying 
$ \mathbb{E}[v_{k+1} \mid \mathcal{F}_k] \leq (1-a_k) v_k + b_k$ for $k \geq 0$
where $a_k \in [0,1], b_k \geq 0$ for every $k \geq 0$ and 
$ \sum_{k=0}^{\infty} a_k = \infty, \quad \sum_{k=0}^{\infty} b_k < \infty, \mbox{and} \lim_{k \to \infty} \tfrac{b_k}{a_k} = 0. $
Then $v_k \to 0$ a.s. as $k \to \infty$. 
\end{lemma}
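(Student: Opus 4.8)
The plan is to reduce the given recursion to the form covered by Lemma~\ref{robbins}, extract from it an almost-sure summability statement, and then upgrade the resulting convergence to convergence to zero via a contradiction argument that exploits $\sum_k a_k = \infty$.

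First I would rewrite the hypothesis $\mathbb{E}[v_{k+1}\mid\mathcal{F}_k]\leq(1-a_k)v_k+b_k$ in the template $\mathbb{E}[v_{k+1}\mid\mathcal{F}_k]\leq(1+u_k)v_k-\delta_k+\psi_k$ of Lemma~\ref{robbins}, using the identifications $u_k\equiv 0$, $\delta_k\triangleq a_k v_k$, and $\psi_k\triangleq b_k$. Since $v_k$ is $\mathcal{F}_k$-adapted and nonnegative and $a_k\geq 0$, the quantity $\delta_k=a_k v_k$ is a nonnegative $\mathcal{F}_k$-adapted random variable, while $\psi_k=b_k$ is deterministic and nonnegative. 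The series requirements of Lemma~\ref{robbins} hold because $\sum_k u_k=0<\infty$ and $\sum_k \psi_k=\sum_k b_k<\infty$ by hypothesis. Lemma~\ref{robbins} then yields, almost surely, that $v_k$ converges to some nonnegative random variable $v$ and that $\sum_{k=0}^{\infty} a_k v_k=\sum_{k=0}^{\infty}\delta_k<\infty$.

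The key step is to argue that $v=0$ almost surely. I would proceed by contradiction on the event $\{v>0\}$: there the convergence $v_k\to v$ produces a (random) index $K$ beyond which $v_k\geq v/2>0$, so that $\sum_{k\geq K} a_k v_k\geq \tfrac{v}{2}\sum_{k\geq K} a_k$. Because $\sum_k a_k=\infty$, the tail $\sum_{k\geq K} a_k$ diverges, forcing $\sum_k a_k v_k=+\infty$ on $\{v>0\}$. This contradicts the almost-sure finiteness of $\sum_k a_k v_k$ established above, so $\mathbb{P}(v>0)=0$ and hence $v_k\to 0$ almost surely.

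I expect the main subtlety to lie not in any single calculation but in the clean translation of the recursion into the Robbins--Siegmund template, specifically the recognition that the contractive term $-a_k v_k$ should be placed in the $-\delta_k$ slot so that its summability is delivered for free by Lemma~\ref{robbins}; once $\sum_k a_k v_k<\infty$ is in hand, the closing contradiction is routine. I would remark that this route invokes $\sum_k b_k<\infty$ together with $\sum_k a_k=\infty$, whereas the condition $\lim_{k\to\infty} b_k/a_k=0$ is the hypothesis that drives the purely deterministic ($L^1$) analogue of the statement and is retained here for consistency with~\cite{polyak1987introduction}.
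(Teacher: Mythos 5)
Your proof is correct. Note that the paper does not actually prove Lemma~\ref{as-recur}: it is stated without proof and attributed to \cite{polyak1987introduction}, so there is no in-paper argument to compare against. Your route---casting the recursion into the Robbins--Siegmund template of Lemma~\ref{robbins} with $u_k\equiv 0$, $\delta_k = a_kv_k$, $\psi_k=b_k$, extracting $v_k\to v$ and $\sum_k a_kv_k<\infty$ a.s., and then ruling out $\{v>0\}$ because every tail of the divergent series $\sum_k a_k$ diverges---is the standard way to derive this result from the supermartingale convergence lemma, and all the adaptedness and nonnegativity requirements ($a_k\in[0,1]$, $v_k\ge 0$, $a_k$ deterministic) are verified. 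Your closing observation is also accurate: under the summability hypothesis $\sum_k b_k<\infty$ the condition $b_k/a_k\to 0$ plays no role in this argument (it is the driver of the deterministic variant in which $\sum_k b_k<\infty$ is not assumed), so your proof in fact establishes the conclusion under a slightly smaller set of hypotheses than the lemma lists.
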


\subsection{Convergence analysis under merely monotone $A$} \label{sec:4.2}
In this subsection, we derive a.s. convergence guarantees and rate statements. 
First, we prove the a.s. convergence of the sequence generated by this scheme. We start with a lemma.
\begin{lemma}\label{egt} 
Consider a sequence $\{x_k\}$ generated by ({\bf vr-SMFBS}). Let Assumptions~\ref{lip-mon} and \ref{moment} hold. Suppose $\gamma \le \tfrac{1}{2 \tilde L}$ and $\tilde
L^2 \triangleq L^2+ \tfrac{4 \nu_1^2}{N_0}$.
Then for any $k \ge 0$, 
\begin{align*}
\mathbb{E}[\|x_{k+1}&-x^*\|^2\mid\mathcal{F}_{k}]
\le\left(1+ \tfrac{25\gamma^2 \nu_1^2}{N_k}\right)\|x_k-x^*\|^2 \\&+\tfrac{25\gamma^2\nu_1^2\|x^*\|^2}{N_k}+{\tfrac{17\gamma^2\nu_2^2}{2N_k}} -\tfrac{r^2_\gamma(x_k)}{4}.
\end{align*}
\end{lemma}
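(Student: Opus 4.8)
The plan is to transplant the deterministic analysis of Tseng's modified forward--backward scheme into the sampled setting and then track the perturbations introduced by the mini-batch estimators. Write $\bar w_k = A_k - A(x_k)$ and $\bar w_{k+1/2} = A_{k+1/2} - A(x_{k+1/2})$ for the averaged sampling errors and fix a solution $x^*$, so that $-A(x^*) \in B(x^*)$. First I would start from the correction step $x_{k+1} = x_{k+1/2} - \gamma(A_{k+1/2} - A_k)$, expand $\|x_{k+1}-x^*\|^2 = \|x_{k+1/2}-x^*\|^2 - 2\gamma\langle A_{k+1/2}-A_k, x_{k+1/2}-x^*\rangle + \gamma^2\|A_{k+1/2}-A_k\|^2$, and separately rewrite $\|x_{k+1/2}-x^*\|^2 = -\|x_{k+1/2}-x_k\|^2 + 2\langle x_{k+1/2}-x_k, x_{k+1/2}-x^*\rangle + \|x_k-x^*\|^2$.

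Next, the half-step resolvent gives $\tfrac1\gamma(x_k - \gamma A_k - x_{k+1/2}) \in B(x_{k+1/2})$; combining this with $-A(x^*)\in B(x^*)$ and the maximal monotonicity of $B$ from Assumption~\ref{lip-mon} yields the prox inequality $\langle x_{k+1/2}-x_k, x_{k+1/2}-x^*\rangle \le -\gamma\langle A_k - A(x^*), x_{k+1/2}-x^*\rangle$. Substituting this into the two expansions, the $A_k$-dependent inner products telescope to $-2\gamma\langle A_{k+1/2}-A(x^*), x_{k+1/2}-x^*\rangle$; writing $A_{k+1/2} = A(x_{k+1/2}) + \bar w_{k+1/2}$ and using monotonicity of $A$ to discard the deterministic part leaves only the zero-mean term $-2\gamma\langle \bar w_{k+1/2}, x_{k+1/2}-x^*\rangle$. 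This produces the skeleton $\|x_{k+1}-x^*\|^2 \le \|x_k-x^*\|^2 - \|x_{k+1/2}-x_k\|^2 + \gamma^2\|A_{k+1/2}-A_k\|^2 - 2\gamma\langle\bar w_{k+1/2}, x_{k+1/2}-x^*\rangle$, in which $\bar w_k$ has cancelled out of the inner products and survives only through the Lipschitz term and through the dependence of $x_{k+1/2}$ on the first half-sample.

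I would then bound $\gamma^2\|A_{k+1/2}-A_k\|^2 \le 2\gamma^2 L^2\|x_{k+1/2}-x_k\|^2 + 4\gamma^2\|\bar w_{k+1/2}\|^2 + 4\gamma^2\|\bar w_k\|^2$ via Lipschitzness of $A$, and relate $\|x_{k+1/2}-x_k\|^2$ to the residual $r_\gamma(x_k)$ of Lemma~\ref{rf} using nonexpansiveness of $(\mathbf{I}+\gamma B)^{-1}$: with the deterministic half-step $\hat x_{k+1/2} = (\mathbf{I}+\gamma B)^{-1}(x_k - \gamma A(x_k))$ one has $\|x_{k+1/2}-\hat x_{k+1/2}\| \le \gamma\|\bar w_k\|$ and $r_\gamma(x_k) = \|x_k-\hat x_{k+1/2}\|$, so $\|x_{k+1/2}-x_k\|^2 \ge \tfrac12 r_\gamma^2(x_k) - \gamma^2\|\bar w_k\|^2$. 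Because $\gamma \le 1/(2\tilde L)$ forces $1 - 2\gamma^2 L^2 \ge \tfrac12$, the net coefficient of the residual survives at $\tfrac14$, giving the target term $-r_\gamma^2(x_k)/4$ at the cost of extra $\gamma^2\|\bar w_k\|^2$ contributions.

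Finally I would take $\mathbb{E}[\,\cdot\mid\mathcal{F}_k]$: the noise inner product vanishes by Assumption~\ref{moment}(i) and the tower property, while mini-batch averaging of the state-dependent bound in Assumption~\ref{moment}(ii) gives $\mathbb{E}[\|\bar w_k\|^2\mid\mathcal{F}_k] \le (\nu_1^2\|x_k\|^2+\nu_2^2)/N_k$. The main obstacle is the half-step variance $\mathbb{E}[\|\bar w_{k+1/2}\|^2\mid\mathcal{F}_k]$, which depends on $\mathbb{E}[\|x_{k+1/2}\|^2\mid\mathcal{F}_k]$, itself random through $\bar w_k$; I would control it through the resolvent bound $\|x_{k+1/2}-x^*\| \le \sqrt{1+\gamma^2 L^2}\,\|x_k-x^*\| + \gamma\|\bar w_k\|$ (monotonicity plus Lipschitzness of $A$), so that $\mathbb{E}[\|x_{k+1/2}\|^2\mid\mathcal{F}_k]$ is bounded by multiples of $\|x_k-x^*\|^2$, $\|x^*\|^2$, and $\nu_2^2/N_k$. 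Substituting $\|x_k\|^2 \le 2\|x_k-x^*\|^2 + 2\|x^*\|^2$ and using $\gamma^2\nu_1^2/N_k \le \gamma^2\nu_1^2/N_0 \le \tfrac1{16}$ — which follows precisely from $\gamma \le 1/(2\tilde L)$ with $\tilde L^2 = L^2 + 4\nu_1^2/N_0$ — to absorb the cross terms consolidates everything into the stated coefficients $25\gamma^2\nu_1^2/N_k$, $25\gamma^2\nu_1^2\|x^*\|^2/N_k$, and $17\gamma^2\nu_2^2/(2N_k)$. The delicate part is exactly this constant accounting: keeping the residual coefficient at $\tfrac14$ while all $\nu_1$-dependent contributions, propagated through the noisy half-step, assemble into the single $(1 + 25\gamma^2\nu_1^2/N_k)$ multiplier of $\|x_k-x^*\|^2$ that makes the recursion amenable to Lemma~\ref{robbins}.
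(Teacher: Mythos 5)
Your proposal follows essentially the same route as the paper's proof: the same expansion of the squared distances around the two half-steps, the same use of monotonicity of $A$ and $B$ to discard the deterministic inner product, the Lipschitz bound on $\gamma^2\|A_{k+1/2}-A_k\|^2$, the nonexpansiveness-based link between $\|x_{k+1/2}-x_k\|^2$ and $r_\gamma^2(x_k)$ costing an extra $\gamma^2\|\bar w_k\|^2$, and the tower-property/state-dependent moment arguments under the same steplength condition. The only (immaterial) deviation is that you bound $\mathbb{E}[\|x_{k+1/2}\|^2\mid\mathcal{F}_k]$ via a direct resolvent estimate on $\|x_{k+1/2}-x^*\|$, whereas the paper writes $\|x_{k+1/2}\|^2\le 2\|x_{k+1/2}-x_k\|^2+2\|x_k\|^2$ and absorbs the first term into the nonnegative coefficient of $\|x_k-x_{k+1/2}\|^2$; both yield a bound of the stated form.
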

\begin{proof}
From the definition of $x_{k+\frac{1}{2}}$ and $x_{k+1}$, we have
\begin{align*} 
\begin{aligned}
x_{k+\frac{1}{2}}&+\gamma v_{k+\frac{1}{2}}=x_k-\gamma (u_k+\bar{w}_k), \\
x_{k+1} & =x_{k+\frac{1}{2}}-\gamma (u_{k+\frac{1}{2}}+\bar{w}_{k+\frac{1}{2}}-u_k-\bar{w}_k).
\end{aligned}
\end{align*}
where $u_k=A(x_k), u_{k+\frac{1}{2}}=A(x_{k+\frac{1}{2}}), v_{k+\frac{1}{2}} \in B(x_{k+\frac{1}{2}})$.
From $0 \in A(x^*)+B(x^*)$, 
\begin{align*}
u^*+v^*=0, \mbox{ where }  u^*=A(x^*), \quad v^* \in B(x^*)
\end{align*}
We have the following equality:
\begin{align}
\notag & \quad \|x_k-x^*\|^2=\|x_k-x_{k+\frac{1}{2}}+x_{k+\frac{1}{2}}-x_{k+1}+x_{k+1}-x^*\|^2\\
\notag&=\|x_k-x_{k+\frac{1}{2}}\|^2+\|x_{k+\frac{1}{2}}-x_{k+1}\|^2+\|x_{k+1}-x^*\|^2\\\notag&+2(x_k-x_{k+\frac{1}{2}})^\mathsf{T}(x_{k+\frac{1}{2}}-x^*)\\\notag&+2(x_{k+\frac{1}{2}}-x_{k+1})^\mathsf{T}(x_{k+1}-x^*)\\
\notag&=\|x_k-x_{k+\frac{1}{2}}\|^2+\|x_{k+\frac{1}{2}}-x_{k+1}\|^2+\|x_{k+1}-x^*\|^2\\\notag&+2(x_k-x_{k+\frac{1}{2}})^\mathsf{T}(x_{k+\frac{1}{2}}-x^*)-2\|x_{k+\frac{1}{2}}-x_{k+1}\|^2\\\notag&+2(x_{k+\frac{1}{2}}-x_{k+1})^\mathsf{T}(x_{k+\frac{1}{2}}-x^*)\\
\notag&=\|x_k-x_{k+\frac{1}{2}}\|^2-\|x_{k+\frac{1}{2}}-x_{k+1}\|^2+\|x_{k+1}-x^*\|^2\\\notag&+2(x_k-x_{k+1})^\mathsf{T}(x_{k+\frac{1}{2}}-x^*)\\
\notag&=\|x_k-x_{k+\frac{1}{2}}\|^2 -\gamma^2\|u_{k+\frac{1}{2}}+\bar{w}_{k+\frac{1}{2}}-u_k-\bar{w}_k\|^2\\&+\|x_{k+1}-x^*\|^2+2\gamma(u_{k+\frac{1}{2}}+v_{k+\frac{1}{2}}+\bar{w}_{k+\frac{1}{2}})^\mathsf{T}(x_{k+\frac{1}{2}}-x^*).\label{eq18}
\end{align}
By Lemma~\ref{rf}, $r_{\gamma}$ is a residual function for (GE), defined as  $r_\gamma(x) \triangleq \|x-((\mathbf{I}+\gamma B)^{-1}(x - \gamma A(x))\|$. It follows that
\begin{align*}
&r_{\gamma}^2(x_k)  = \|x_k - (\mathbf{I}+\gamma B)^{-1}(x_k - \gamma A(x_k))\|^2 \\
	& =\|x_k-x_{k+\frac{1}{2}}+ x_{k+\frac{1}{2}} -(\mathbf{I}+\gamma B)^{-1}(x_k-\gamma A(x_k))\|^2 \\\notag
	& \le2\|x_k-x_{k+\frac{1}{2}}\|^2\\&+2\| (\mathbf{I}+\gamma B)^{-1}(x_k-\gamma A_k) -(\mathbf{I}+\gamma B)^{-1}(x_k-\gamma A(x_k))\|^2 \\
\notag&\le2\|x_k-x_{k+\frac{1}{2}}\|^2+2\gamma^2\| \bar{w}_k\|^2,
\end{align*}
where the last inequality holds because $(\mathbf{I}+\gamma B)^{-1}$ is a non-expansive operator.
Consequently, we have that
\begin{align}
-\|x_k-x_{k+\frac{1}{2}}\|^2 & \leq -\tfrac{1}{2}  r_{\gamma}^2(x_k) + \gamma^2\|\bar{w}_k\|^2. \label{eq-s1}
\end{align}
Following \eqref{eq18}, we have
\begin{align}
\notag & \quad \|x_{k+1}-x^*\|^2=\|x_k-x^*\|^2-\|x_k-x_{k+\frac{1}{2}}\|^2\\\notag&+\gamma^2\|u_{k+\frac{1}{2}}+\bar{w}_{k+\frac{1}{2}}-u_k-\bar{w}_k\|^2\\\notag&-2\gamma(u_{k+\frac{1}{2}}+v_{k+\frac{1}{2}}+\bar{w}_{k+\frac{1}{2}})^T(x_{k+\frac{1}{2}}-x^*)\\
\notag&=\|x_k-x^*\|^2-\|x_k-x_{k+\frac{1}{2}}\|^2\\\notag&+\gamma^2\|u_{k+\frac{1}{2}}+\bar{w}_{k+\frac{1}{2}}-u_k-\bar{w}_k\|^2\\
\notag&-2\gamma(u_{k+\frac{1}{2}}+v_{k+\frac{1}{2}})^\mathsf{T}(x_{k+\frac{1}{2}}-x^*)-2\gamma \bar{w}_{k+\frac{1}{2}}^\mathsf{T}(x_{k+\frac{1}{2}}-x^*) \\
\notag&\le\|x_k-x^*\|^2-\|x_k-x_{k+\frac{1}{2}}\|^2+2\gamma_k^2\|u_{k+\frac{1}{2}}-u_k\|^2\\\notag&+2\gamma_k^2\|\bar{w}_{k+\frac{1}{2}}-\bar{w}_k\|^2-2\gamma(u_{k+\frac{1}{2}}+v_{k+\frac{1}{2}})^\mathsf{T}(x_{k+\frac{1}{2}}-x^*)\\\notag&-2\gamma \bar{w}_{k+\frac{1}{2}}^\mathsf{T}(x_{k+\frac{1}{2}}-x^*)\\
\notag&\le\|x_k-x^*\|^2-(1-2\gamma^2L^2)\|x_k-x_{k+\frac{1}{2}}\|^2\\\notag&-2\gamma\underbrace{(u_{k+\frac{1}{2}}+v_{k+\frac{1}{2}})^\mathsf{T}(x_{k+\frac{1}{2}}-x^*)}_{\mtiny{\ge 0}} \\
&+2\gamma^2\|\bar{w}_{k+\frac{1}{2}}-\bar{w}_k\|^2-2\gamma \bar{w}_{k+\frac{1}{2}}^\mathsf{T}(x_{k+\frac{1}{2}}-x^*)\label{new21} \\
\notag& \le \|x_k-x^*\|^2-\left(\tfrac{1}{2}-2\gamma^2L^2\right)\|x_k-x_{k+\frac{1}{2}}\|^2\\\notag&+2\gamma^2\|\bar{w}_k-\bar{w}_{k+\frac{1}{2}}\|^2+2\gamma \bar{w}_{k+\frac{1}{2}}^\mathsf{T}(x^*-x_{k+\frac{1}{2}})\\\notag&-\tfrac{1}{2}\|x_k-x_{k+\frac{1}{2}}\|^2 \\
\notag&\overset{\eqref{eq-s1}}{\leq}\|x_k-x^*\|^2-\left(\tfrac{1}{2}-2\gamma^2L^2\right)\|x_k-x_{k+\frac{1}{2}}\|^2\\\notag&+2\gamma^2\|\bar{w}_k-\bar{w}_{k+\tfrac{1}{2}}\|^2+2\gamma \bar{w}_{k+\frac{1}{2}}^\mathsf{T}(x^*-x_{k+\frac{1}{2}})\\\notag&-\tfrac{1}{4}r^2_\gamma(x_k)+\tfrac{1}{2}\gamma^2\|\bar{w}_k\|^2 \\
\notag&\le\|x_k-x^*\|^2-\left(\tfrac{1}{2}-2\gamma^2L^2\right)\|x_k-x_{k+\frac{1}{2}}\|^2+\tfrac{9}{2}\gamma^2\|\bar{w}_k\|^2\\\notag&+4\gamma^2\|\bar{w}_{k+\frac{1}{2}}\|^2+2\gamma \bar{w}_{k+\frac{1}{2}}^\mathsf{T}(x^*-x_{k+\frac{1}{2}})-\tfrac{1}{4}r^2_\gamma(x_k).
\end{align}
Taking expectations conditioned on $\mathcal{F}_{k}$, we 
obtain the following bound:
\begin{align}
\notag&\mathbb{E}[\|x_{k+1}-x^*\|^2\mid\mathcal{F}_{k}] \le\|x_k-x^*\|^2\\\notag&-(\tfrac{1}{2}-2\gamma^2L^2){\mathbb{E}[}\|x_k-x_{k+\frac{1}{2}}\|^2{\mid\mathcal{F}_{k}]}\\\notag&+\mathbb{E}[\mathbb{E}[4\gamma^2\|\bar{w}_{k+\frac{1}{2}}\|^2\mid\mathcal{F}_{k+\frac{1}{2}}]\mid\mathcal{F}_{k}]+\mathbb{E}\left[\tfrac{9}{2}\gamma^2\|\bar{w}_{k}\|^2\mid\mathcal{F}_{k}\right]\\\notag&-\mathbb{E}[\mathbb{E}[2\gamma \bar{w}_{k+\frac{1}{2}}^\mathsf{T}(x_{k+\frac{1}{2}}-x^*)\mid\mathcal{F}_{k+\frac{1}{2}}]\mid\mathcal{F}_{k}]-\tfrac{1}{4}r^2_\gamma(x_k) \\
\notag & \leq \|x_k-x^*\|^2-(\tfrac{1}{2}-2\gamma^2L^2)\mathbb{E}[\|x_k-x_{k+\frac{1}{2}}\|^2{\mid\mathcal{F}_{k}]}\\\notag&+{\tfrac{4\gamma^2(\nu_1^2 {\mathbb{E}[}\|x_{k+\frac{1}{2}}\|^2{\mid\mathcal{F}_{k}]} + \nu^2_2)}{N_k}+\tfrac{\frac{9}{2}\gamma^2(\nu_1^2 \|x_k\|^2 + \nu^2_2)}{N_k}} 
-\tfrac{1}{4}r^2_\gamma(x_k) \\
\notag & \leq \|x_k-x^*\|^2-(\tfrac{1}{2}-2\gamma^2L^2){\mathbb{E}[}\|x_k-x_{k+\frac{1}{2}}\|^2{\mid\mathcal{F}_{k}]}\\\notag&+\tfrac{4\gamma^2(2\nu_1^2 {\mathbb{E}[}\|x_k-x_{k+\frac{1}{2}}\|^2{\mid\mathcal{F}_{k}]} + 2\nu_1^2 \|x_k\|^2 + \nu^2_2)}{N_k}\\\notag&+\tfrac{\frac{9}{2}\gamma^2(\nu_1^2 \|x_k\|^2 + \nu^2_2)}{N_k}
-\tfrac{1}{4}r^2_\gamma(x_k) \\
\notag & \leq \|x_k-x^*\|^2-(\tfrac{1}{2}-2\gamma^2L^2- \tfrac{8\gamma^2 \nu_1^2}{N_k}){\mathbb{E}[}\|x_k-x_{k+\frac{1}{2}}\|^2{\mid\mathcal{F}_{k}]}\\\notag&+{\tfrac{\tfrac{25}{2}\gamma^2(2\nu_1^2 \|x_k-x^*\|^2 + 2\nu_1^2 \|x^*\|^2)}{N_k}+{\tfrac{17\gamma^2\nu_2^2}{2N_k}}} 
-\tfrac{r^2_\gamma(x_k)}{4} \\
\notag&\le \left(1+ \tfrac{25\gamma^2 \nu_1^2}{N_k}\right)\|x_k-x^*\|^2+\tfrac{25\gamma^2\nu_1^2\|x^*\|^2}{N_k}+{\tfrac{17\gamma^2\nu_2^2}{2N_k}} -\tfrac{r^2_\gamma(x_k)}{4}, 
\end{align}
where the penultimate inequality follows from noting that $\tfrac{1}{2}-2\gamma^2L^2-
\tfrac{8\gamma^2 \nu_1^2}{N_k} \geq \tfrac{1}{2}- 2\gamma^2(L^2+ \tfrac{4
\nu_1^2}{N_0})) \geq 0$, if $\gamma \le \tfrac{1}{2 \tilde L}$ and $\tilde
L^2 \triangleq L^2+ \tfrac{4\nu_1^2}{N_0}.$
\end{proof} 

\begin{theorem}[{\bf a.s. convergence of (vr-SMFBS)}] \label{ass} 
Consider a sequence $\{x_k\}$ generated by {\bf (vr-SMFBS)}. Let Assumptions~\ref{lip-mon} and \ref{moment} hold. Suppose $\gamma \le \tfrac{1}{2 \tilde L}$ and $\tilde
L^2 \triangleq L^2+ \tfrac{4 \nu_1^2}{N_0}$, where
$\{N_k\}$ is a
non-decreasing sequence, and $\sum_{k=0}^\infty \tfrac{1}{N_k} < M$.  Then for any $x_0 \in \Real^n$, $\{x_k\}$ converges to a solution $x^* \in \mathcal{X}^*\triangleq T^{-1}(0)$ in an a.s. sense.
\end{theorem}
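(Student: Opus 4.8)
The plan is to recognize the one-step bound of Lemma~\ref{egt} as an instance of the Robbins--Siegmund recursion in Lemma~\ref{robbins}, and then to upgrade the resulting Fej\'er-type behavior to convergence of the entire sequence. First I would fix an arbitrary solution $x^*\in\mathcal{X}^*$ and, in the notation of Lemma~\ref{robbins}, set $v_k=\|x_k-x^*\|^2$, $u_k=\tfrac{25\gamma^2\nu_1^2}{N_k}$, $\psi_k=\tfrac{25\gamma^2\nu_1^2\|x^*\|^2}{N_k}+\tfrac{17\gamma^2\nu_2^2}{2N_k}$, and $\delta_k=\tfrac{1}{4}r_\gamma^2(x_k)$. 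Since $\{N_k\}$ is nondecreasing with $\sum_k 1/N_k<M<\infty$, both $\sum_k u_k<\infty$ and $\sum_k \psi_k<\infty$ hold deterministically. Lemma~\ref{robbins} then yields, on an event of probability one, that $\|x_k-x^*\|^2$ converges to a finite limit and that $\sum_k r_\gamma^2(x_k)<\infty$; in particular $r_\gamma(x_k)\to 0$ almost surely.

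Next I would exploit the convergence of $\|x_k-x^*\|$ to deduce that $\{x_k\}$ is almost surely bounded, so that it admits at least one limit point. Because $A$ is Lipschitz and $(\mathbf{I}+\gamma B)^{-1}$ is nonexpansive, the map $r_\gamma$ is continuous; hence along any convergent subsequence $x_{k_j}\to\bar x$ we obtain $r_\gamma(\bar x)=\lim_j r_\gamma(x_{k_j})=0$, and Lemma~\ref{rf} identifies $\bar x$ as an element of $\mathcal{X}^*=T^{-1}(0)$. Thus every limit point of $\{x_k\}$ lies in $\mathcal{X}^*$, which is itself closed as the zero set of the continuous function $r_\gamma$.

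To pass from subsequential to full convergence I would invoke a quasi-Fej\'er (Opial-type) argument, which is where the genuine difficulty lies. The obstacle is that the probability-one event on which $\|x_k-x^*\|$ converges a priori depends on the choice of $x^*$, whereas the candidate limit $\bar x$ is itself random, so one cannot simply substitute $x^*=\bar x$. I would circumvent this by fixing a countable dense subset $D\subseteq\mathcal{X}^*$ (available since $\mathcal{X}^*\subseteq\mathbb{R}^n$ is separable) and intersecting over $d\in D$ the countably many full-measure events on which $\|x_k-d\|$ converges, together with the event on which every limit point lies in $\mathcal{X}^*$; this intersection still has probability one. On this event, for any two limit points $\bar x,\bar y\in\mathcal{X}^*$ and any $d\in D$, passing to the appropriate subsequences gives $\|\bar x-d\|=\lim_k\|x_k-d\|=\|\bar y-d\|$. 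Choosing $d\in D$ with $d\to\bar x$ forces $\|\bar x-\bar y\|=0$, so the limit point is unique and $\{x_k\}$ converges almost surely to a single $\bar x\in\mathcal{X}^*$.
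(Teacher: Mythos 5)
Your proof is correct and follows essentially the same route as the paper: the one-step bound of Lemma~\ref{egt} is fed into the Robbins--Siegmund result (Lemma~\ref{robbins}) to obtain a.s.\ convergence of $\|x_k-x^*\|$ and summability of $r_\gamma^2(x_k)$, after which continuity of $r_\gamma$ places every limit point of the a.s.\ bounded sequence in $\mathcal{X}^*$ and the quasi-Fej\'er property forces the limit point to be unique. The only difference is that you explicitly repair a measure-theoretic point the paper leaves implicit---the null set on which $\|x_k-x^*\|$ converges depends on $x^*$, while the candidate limit $\bar x$ is random---by intersecting the full-measure events over a countable dense subset of $\mathcal{X}^*$; this is the standard and correct way to make the final step rigorous, and it strengthens rather than alters the paper's argument.
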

\begin{proof}
We may now apply Lemma~\ref{robbins} which allows us to
claim that $\{\|x_k-x^*\|\}$ is convergent for any $x^* \in
\mathcal{X}^*$ and $\sum_{k}r_\gamma(x_k)^2<\infty$ in an a.s.  sense.
Therefore, in an a.s.  sense, we have $$ \lim_{k \to \infty}
r^2_\gamma(x_k) = 0. $$ Since $\{\|x_k-x^*\|^2\}$ is a
convergent sequence in an a.s. sense, $\{x_k\}$ is bounded
a.s. and has a convergent subsequence. Consider any
convergent subsequence of $\{x_k\}$ {with index set} denoted
by ${\cal K}$ and suppose its limit point is denoted by
$\bar{x}$. We have that $\lim_{k \in {\cal K}}
r_{\gamma}(x_k) = r_{\gamma}({\bar x}) = 0$ a.s. since
$r_{\gamma}(\cdot)$ is a continuous function. It follows that
$\bar x$ is a solution to $0\in T(x)$. Consequently, some
convergent subsequence of $\{x_k\}_{k \geq 0}$ , denoted by
$\mathcal{K}$, satisfies $\lim_{k \in \mathcal{K}} x_k = \bar x \in
\mathcal{X}^*$ a.s.. Since $\{\|x_k-x^*\|\}$ is convergent a.s. for any $x^* \in
\mathcal{X}^*$, it follows that $\{\|x_k-\bar{x}\|^2\}$ is convergent a.s.
and its unique limit point is zero. Thus every subsequence
of $\{x_k\}$ converge a.s. to $\bar{x}$ which leads to the
claim that the entire sequence of $\{x_k\}$ is convergent to
a point $\bar{x} \in \mathcal{X}^*$.  
\end{proof}

When the sampling process is computationally expensive (i.e., such as in the queueing systems or PDE, etc.), we prove the following corollary regarding ({\bf vr-SMFBS}) with $N_k = 1$ for every $k$.

\begin{corollary}[{\bf a.s. convergence under single sample}] \label{as-ss}
Consider a sequence $\{x_k\}$ generated by {\bf (vr-SMFBS)}. Let Assumptions~\ref{lip-mon} and \ref{moment} hold. Suppose $\sum_k^\infty\gamma_k=\infty$, $\sum_k^\infty\gamma_k^2<\infty$  and $N_k=1$ for every $k \in \mathbb{Z}_+$. In addition, suppose $A$ is co-coercive with constant $c$ and strictly monotone. Furthermore, suppose $\gamma_k \le \min \left\{\tfrac{1}{4cL^2},\tfrac{1}{\sqrt{2(L^2+4\nu_1^2)}}\right\}$. Then $\{x_{k}\}$ converges to a solution $x^* \in \mathcal{X}^*\triangleq T^{-1}(0)$ in an a.s. sense.
\end{corollary}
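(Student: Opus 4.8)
The plan is to reuse the one-step algebraic expansion \eqref{eq18}--\eqref{new21} from the proof of Lemma~\ref{egt}, specialized to $N_k=1$ and the variable step $\gamma_k$, but to change which term supplies the descent. The essential difficulty of the single-sample regime is that the sampling error no longer vanishes, so the residual-based descent $-\tfrac14 r^2_{\gamma_k}(x_k)$ obtained through \eqref{eq-s1} is worthless here: since $\gamma_k\to0$, one has $r_{\gamma_k}(x_k)\to0$ for \emph{every} bounded sequence, whether or not it approaches a solution. I would therefore \emph{not} invoke \eqref{eq-s1}; instead I retain the cross-term $(u_{k+1/2}+v_{k+1/2})^\mathsf{T}(x_{k+1/2}-x^*)$ that Lemma~\ref{egt} merely discards as nonnegative, and lower-bound it using the new hypotheses. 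With $u^*+v^*=0$, $u^*=A(x^*)$, $v^*\in B(x^*)$, co-coercivity of $A$ gives $(u_{k+1/2}-u^*)^\mathsf{T}(x_{k+1/2}-x^*)\ge c\|A(x_{k+1/2})-A(x^*)\|^2$ while monotonicity of $B$ gives $(v_{k+1/2}-v^*)^\mathsf{T}(x_{k+1/2}-x^*)\ge0$; hence the cross-term is at least $c\|A(x_{k+1/2})-A(x^*)\|^2$, producing a descent $-2c\gamma_k\|A(x_{k+1/2})-A(x^*)\|^2$ that does \emph{not} collapse as $\gamma_k\to0$.

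Completing the expansion as in Lemma~\ref{egt} (now with $N_k=1$), I would bound $\gamma_k^2\|u_{k+1/2}-u_k\|^2\le\gamma_k^2L^2\|x_k-x_{k+1/2}\|^2$ by Lipschitzness, annihilate the linear noise term $-2\gamma_k w_{k+1/2}^\mathsf{T}(x_{k+1/2}-x^*)$ by conditioning first on $\mathcal{F}_{k+1/2}$ and using Assumption~\ref{moment}(i), and control $\|w_k\|^2,\|w_{k+1/2}\|^2$ via Assumption~\ref{moment}(ii) together with $\|x\|^2\le2\|x-x^*\|^2+2\|x^*\|^2$. The first step-size bound $\gamma_k\le 1/\sqrt{2(L^2+4\nu_1^2)}$ is precisely what makes the coefficient $1-2\gamma_k^2(L^2+4\nu_1^2)$ of $\|x_k-x_{k+1/2}\|^2$ nonnegative, so that this term may be kept as a second (genuine) descent; the bound $\gamma_k\le 1/(4cL^2)$ absorbs the residual $\mathcal{O}(\gamma_k^2)\|A(x_{k+1/2})-A(x^*)\|^2$ contribution into the co-coercive descent. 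The outcome is a recursion of the form
\begin{align*}
\mathbb{E}[\|x_{k+1}-x^*\|^2\mid\mathcal{F}_k] &\le (1+C_1\gamma_k^2)\|x_k-x^*\|^2 + C_2\gamma_k^2 \\
&\quad - \mathbb{E}[\tfrac12\|x_k-x_{k+1/2}\|^2 + 2c\gamma_k\|A(x_{k+1/2})-A(x^*)\|^2 \mid \mathcal{F}_k],
\end{align*}
with $C_1,C_2$ depending on $\nu_1,\nu_2,\|x^*\|$.

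Since $\sum_k\gamma_k^2<\infty$, Lemma~\ref{robbins} applies with $v_k=\|x_k-x^*\|^2$, $u_k=C_1\gamma_k^2$, $\psi_k=C_2\gamma_k^2$, yielding: (a) $\{\|x_k-x^*\|\}$ converges a.s., so $\{x_k\}$ is bounded a.s.; and (b) the conditional descent sum is finite a.s. A standard conditional-to-unconditional argument (the partial sums minus their compensators form a martingale) then gives $\sum_k\|x_k-x_{k+1/2}\|^2<\infty$ and $\sum_k\gamma_k\|A(x_{k+1/2})-A(x^*)\|^2<\infty$ a.s.; the former forces $\|x_k-x_{k+1/2}\|\to0$ a.s., and the latter, together with $\sum_k\gamma_k=\infty$, forces $\liminf_k\|A(x_{k+1/2})-A(x^*)\|=0$ a.s. Strict monotonicity enters twice. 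First, it makes the solution unique: if $x_1^*,x_2^*$ both solve (GE), adding the $A$- and $B$-monotonicity inequalities forces $(A(x_1^*)-A(x_2^*))^\mathsf{T}(x_1^*-x_2^*)=0$, whence $x_1^*=x_2^*$, so $\mathcal{X}^*=\{x^*\}$.

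Finally I would pass to the limit. By Lipschitzness and $\|x_k-x_{k+1/2}\|\to0$, also $\liminf_k\|A(x_k)-A(x^*)\|=0$ a.s.; choose a subsequence $\mathcal{K}$ on which $\|A(x_k)-A(x^*)\|\to0$ and (using a.s. boundedness) $x_k\to\bar x$. Continuity gives $A(\bar x)=A(x^*)$, and strict monotonicity forces $\bar x=x^*$. Thus $x_k\to x^*$ along $\mathcal{K}$; since $\{\|x_k-x^*\|\}$ converges a.s. and has a subsequence tending to $0$, the whole sequence converges a.s. to $x^*$. \textbf{Main obstacle.} The crux is conceptual rather than computational: one must manufacture a descent quantity that simultaneously survives $\gamma_k\to0$ and certifies optimality. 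Co-coercivity supplies a step-size-robust descent, but only in $\|A(\cdot)-A(x^*)\|$, which bounds the image under $A$ and not the iterate; bridging from $\|A(x_k)-A(x^*)\|\to0$ to $x_k\to x^*$ is exactly where strict monotonicity (uniqueness, and the implication $A(\bar x)=A(x^*)\Rightarrow\bar x=x^*$) is indispensable. Handling the $\mathcal{F}_k$- versus $\mathcal{F}_{k+1/2}$-measurability of the $x_{k+1/2}$-dependent descent terms—so that the conditioning is performed in the correct order—is the main secondary technical hazard.
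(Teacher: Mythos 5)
The paper states Corollary~\ref{as-ss} without supplying a proof, so there is no in-paper argument to compare yours against; judged on its own terms, your proposal is a correct and essentially complete route, and it is the natural one for this hypothesis set (it mirrors the classical stochastic forward--backward analysis for co-coercive operators in \cite{rosasco2016stochastic}, which the paper cites for precisely this regime). Your key observations are sound: the residual-based descent of Lemma~\ref{egt} is indeed useless once $\gamma_k\to0$, whereas retaining the cross term and bounding $(u_{k+1/2}+v_{k+1/2})^\mathsf{T}(x_{k+1/2}-x^*)=(u_{k+1/2}-u^*)^\mathsf{T}(\cdot)+(v_{k+1/2}-v^*)^\mathsf{T}(\cdot)\ge c\|A(x_{k+1/2})-A(x^*)\|^2$ produces a step-size-robust descent; the recursion you state follows from \eqref{new21} with $N_k=1$ after conditioning first on $\mathcal{F}_{k+1/2}$; Lemma~\ref{robbins} plus the conditional Borel--Cantelli step then gives $\sum_k\|x_k-x_{k+1/2}\|^2<\infty$ and $\sum_k\gamma_k\|A(x_{k+1/2})-A(x^*)\|^2<\infty$ a.s., and the passage to the limit via strict monotonicity ($A(\bar x)=A(x^*)\Rightarrow\bar x=x^*$) is correct. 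Two minor points to tighten: (i) the bound $\gamma_k\le 1/\sqrt{2(L^2+4\nu_1^2)}$ only makes the coefficient of $\|x_k-x_{k+1/2}\|^2$ nonnegative, so that term is a genuine summable descent only once $\gamma_k$ is bounded away from the threshold --- harmless, since $\sum_k\gamma_k^2<\infty$ forces $\gamma_k\to0$, but you should say so explicitly before concluding $\|x_k-x_{k+1/2}\|\to0$; (ii) your stated use of $\gamma_k\le 1/(4cL^2)$ to absorb an $\mathcal{O}(\gamma_k^2)\|A(x_{k+1/2})-A(x^*)\|^2$ residual is vague --- in the decomposition you actually carry out (bounding $\gamma_k^2\|u_{k+1/2}-u_k\|^2$ by $\gamma_k^2L^2\|x_k-x_{k+1/2}\|^2$) no such residual arises, so either exhibit the term concretely or note that this hypothesis is not needed for your argument.
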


To establish the rate under maximal monotonicity,
we need introduce a metric for ascertaining progress. In
strongly monotone regimes, the mean-squared error serves
as such a metric while the
function value represents such a metric in optimization regimes. In merely
monotone variational inequality problems, a special case of
monotone inclusion problems, the
gap function has proved useful (cf.~\cite{larsson94gap,facchinei2007finite}). When considering
the more general monotone inclusion problem, Borwein and
Dutta presented a gap function~\cite{borwein2016maximal},
inspired by the Fitzpatrick
function~\cite{borwein2010convex,fitzpatrick1988representing}.
\begin{definition}[{\bf Gap function}] 
Given a set-valued mapping
$T:\mathbb{R}^n\rightrightarrows \mathbb{R}^n$, then the gap function $G$ associated with the inclusion problem $0 \in T(x)$ is defined as 
$$
G(x) \triangleq \sup_{y\in \mathrm{dom}(T)}\sup_{z\in T(y)}z^\mathsf{T}(x-y),\quad\forall x\in \Real^n. 
$$
\end{definition}
The gap function is nonnegative for all $ x \in \mathbb{R}^n$ and is zero if and
only if $0 \in T(x)$. To derive the convergence rate under maximal monotonicity, we require boundedness of the domain of $T$ as formalized by the next assumption.
\begin{assumption} \label{bd6} 
The domain of $T$ is bounded, i.e. 
$\|x\|\le D_T,\quad\forall x \in \{v \in \mathbb{R}^n \mid T(v) \neq \emptyset\}.$ 
\end{assumption} 
\ic{Clearly, from the definition, a convex gap function can be extended-valued and its domain is contingent on the boundedness properties of dom $T$. When dom $T$ is bounded, the gap function is globally defined but when dom $T$ is unbounded, one resolution is based on the notion of restricted merit functions, first introduced in \cite{nesterov2007dual}. In this approach, the gap function is defined on a bounded set which belongs to dom $T$. In such instances, a local rate of convergence can be obtained.} \\
We begin by establishing an intermediate result. 
\begin{lemma}\label{t} 
Let Assumptions~\ref{lip-mon} and \ref{moment} hold.
Suppose $\{x_k\}$ denotes a sequence generated by
({\bf vr-SMBFS}). Then for all $y \in \mathrm{dom}(T)$, $z\in T(y)$ and all $k\ge0$,
\begin{align}
\notag2&\gamma z^\mathsf{T}(x_{k+\frac{1}{2}}-y)  \le\|x_k-y\|^2-\|x_{k+1}-y\|^2 \\\notag&-(1-2\gamma^2L^2)\|x_k-x_{k+\frac{1}{2}}\|^2+2\gamma^2\|\bar{w}_{k+\frac{1}{2}}-\bar{w}_k\|^2\\\notag&+2\gamma \bar{w}_{k+\frac{1}{2}}^\mathsf{T}(y-x_{k+\frac{1}{2}}).
\end{align} 
\end{lemma}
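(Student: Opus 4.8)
The plan is to re-run the opening algebraic steps of the proof of Lemma~\ref{egt}, but with the fixed solution $x^*$ replaced by an arbitrary pair $(y,z)$ satisfying $y\in\mathrm{dom}(T)$ and $z\in T(y)$. The first observation is that the expansion culminating in \eqref{eq18} never uses the fact that $x^*$ solves the inclusion; it is a pure algebraic identity following from the defining relations $x_{k+\frac{1}{2}}+\gamma v_{k+\frac{1}{2}}=x_k-\gamma(u_k+\bar{w}_k)$ and $x_{k+1}=x_{k+\frac{1}{2}}-\gamma(u_{k+\frac{1}{2}}+\bar{w}_{k+\frac{1}{2}}-u_k-\bar{w}_k)$. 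Hence I would first write the analogue of \eqref{eq18} with $y$ in place of $x^*$ and rearrange it to isolate $\|x_{k+1}-y\|^2$ on the left-hand side.

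Next, I would control the squared difference term $\gamma^2\|u_{k+\frac{1}{2}}+\bar{w}_{k+\frac{1}{2}}-u_k-\bar{w}_k\|^2$ exactly as in Lemma~\ref{egt}: splitting it via $\|a+b\|^2\le 2\|a\|^2+2\|b\|^2$ into a deterministic part $2\gamma^2\|u_{k+\frac{1}{2}}-u_k\|^2$ and a noise part $2\gamma^2\|\bar{w}_{k+\frac{1}{2}}-\bar{w}_k\|^2$, and then invoking the $L$-Lipschitz property of $A$ from Assumption~\ref{lip-mon} to bound $\|u_{k+\frac{1}{2}}-u_k\|^2=\|A(x_{k+\frac{1}{2}})-A(x_k)\|^2\le L^2\|x_k-x_{k+\frac{1}{2}}\|^2$. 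This produces the coefficient $(1-2\gamma^2L^2)$ multiplying $\|x_k-x_{k+\frac{1}{2}}\|^2$ together with the residual noise term $2\gamma^2\|\bar{w}_{k+\frac{1}{2}}-\bar{w}_k\|^2$.

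The crucial step, and the one that differs essentially from Lemma~\ref{egt}, is the monotonicity argument. Where the earlier proof exploited $u^*+v^*=0$, here I would instead note that $u_{k+\frac{1}{2}}+v_{k+\frac{1}{2}}\in A(x_{k+\frac{1}{2}})+B(x_{k+\frac{1}{2}})=T(x_{k+\frac{1}{2}})$ while $z\in T(y)$; since $T=A+B$ is monotone under Assumption~\ref{lip-mon}, monotonicity yields $(u_{k+\frac{1}{2}}+v_{k+\frac{1}{2}}-z)^\mathsf{T}(x_{k+\frac{1}{2}}-y)\ge0$, that is, $(u_{k+\frac{1}{2}}+v_{k+\frac{1}{2}})^\mathsf{T}(x_{k+\frac{1}{2}}-y)\ge z^\mathsf{T}(x_{k+\frac{1}{2}}-y)$. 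Substituting this lower bound into the cross term $-2\gamma(u_{k+\frac{1}{2}}+v_{k+\frac{1}{2}})^\mathsf{T}(x_{k+\frac{1}{2}}-y)$ and moving $2\gamma z^\mathsf{T}(x_{k+\frac{1}{2}}-y)$ to the left (so that the remaining noise inner product reads $2\gamma\bar{w}_{k+\frac{1}{2}}^\mathsf{T}(y-x_{k+\frac{1}{2}})$) delivers the asserted inequality.

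I expect the main obstacle to be precisely this monotonicity substitution: the whole point is to retain a free $(y,z)$ in the graph of $T$ rather than collapsing to the solution, since this is what later permits taking the supremum over $z\in T(y)$ and $y\in\mathrm{dom}(T)$ to recover the Fitzpatrick-type gap function $G$. I would emphasize that the resulting inequality is entirely pathwise (deterministic): no conditioning or expectations are taken here, unlike in Lemma~\ref{egt}. This is exactly what makes it suitable as a per-iteration building block to be summed and then averaged, yielding the $\mathcal{O}(1/k)$ rate on $G(\bar{x}_k)$ after the noise terms are handled under Assumption~\ref{moment} and the domain boundedness in Assumption~\ref{bd6}.
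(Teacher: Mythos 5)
Your proposal is correct and follows essentially the same route as the paper, which simply repeats the derivation of \eqref{new21} with $x^*$ replaced by $y$ and rearranges; your explicit use of monotonicity of $T=A+B$ to bound $-2\gamma(u_{k+\frac{1}{2}}+v_{k+\frac{1}{2}})^\mathsf{T}(x_{k+\frac{1}{2}}-y)$ by $-2\gamma z^\mathsf{T}(x_{k+\frac{1}{2}}-y)$ is exactly the step the paper leaves implicit. Your observation that the bound is pathwise and that keeping $(y,z)$ free in the graph of $T$ is what enables the later supremum is also consistent with how the paper uses this lemma in Proposition~\ref{gapT}.
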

\begin{proof}
According to \eqref{new21} and replacing $x^*$ with $y\in \mathrm{dom}(T)$, we have that
\begin{align}
\notag2&\gamma z^\mathsf{T}(x_{k+\frac{1}{2}}-y) \le \|x_k-y\|^2-\|x_{k+1}-y\|^2\\\notag&-(1-2\gamma^2L^2)\|x_k-x_{k+\frac{1}{2}}\|^2+2\gamma^2\|\bar{w}_{k+\frac{1}{2}}-\bar{w}_k\|^2\\\notag&-2\gamma \bar{w}_{k+\frac{1}{2}}^\mathsf{T}(x_{k+\frac{1}{2}}-y) \\
\notag&\le \|x_k-y\|^2-\|x_{k+1}-y\|^2-(1-2\gamma^2L^2)\|x_k-x_{k+\frac{1}{2}}\|^2\\&+2\gamma^2\|\bar{w}_{k+\frac{1}{2}}-\bar{w}_k\|^2+2\gamma \bar{w}_{k+\frac{1}{2}}^\mathsf{T}(y-x_{k+\frac{1}{2}}). \label{eq28b}
\end{align}
\end{proof}
Invoking Lemma \ref{t}, we derive a rate statement for $\bar{x}_K$, an
	average of the iterates $\{x_{k+1/2}\}$ generated by ({\bf vr-SMFBS})
over the window constructed from $0$ to $K-1$:
\begin{align}\label{def-ave-smbfs2}
\bar{x}_K\triangleq
\tfrac{\sum_{k=0}^{K-1}x_{k+\frac{1}{2}}}{K}.
\end{align}
\begin{proposition}[{\bf Rate statement under monotonicity}]\label{gapT} 
Consider the ({\bf vr-SMFBS}) scheme. Suppose $x_0 \in \Real^n$ and let $\{\bar{x}_K\}$ be defined in
\eqref{def-ave-smbfs2}. Let Assumptions~\ref{lip-mon} -- \ref{bd6} hold. Suppose $\gamma \le \tfrac{1}{2 \tilde L}$ and $\tilde
L^2 \triangleq L^2+ \tfrac{4 \nu_1^2}{N_0}$,  $\{N_k\}$ is a non-decreasing sequence, and $\sum_{k=0}^\infty N_k<M$.\\
(a) For any $K \ge 1$, $\mathbb{E}[G(\bar{x}_{K})]=\mathcal{O}\left(\tfrac{1}{K}\right).$  \\
(b)  Suppose $N_k=\lfloor k^a\rfloor$, for $a>1$. Then the oracle complexity to compute an $\bar{x}_{K+1}$ such that $\mathbb{E}[G(\bar{x}_{K+1}) \leq \epsilon$ is bounded as 
$\sum_{k=0}^{K}N_k \leq \mathcal{O}\left(\tfrac{1}{\epsilon^{a+1}}\right).$
\end{proposition}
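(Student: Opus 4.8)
The plan is to turn the per-step inequality of Lemma~\ref{t} into a bound on the averaged iterate and then control the noise in expectation. First I would fix an arbitrary $y\in\mathrm{dom}(T)$ and $z\in T(y)$, discard the term $-(1-2\gamma^2L^2)\|x_k-x_{k+\frac{1}{2}}\|^2$ whose coefficient is nonnegative (since $\gamma\le\tfrac{1}{2\tilde L}$ forces $2\gamma^2L^2\le\tfrac12$), and sum over $k=0,\dots,K-1$. The distance terms telescope to $\|x_0-y\|^2-\|x_K-y\|^2\le\|x_0-y\|^2$, while linearity of $z^\mathsf{T}(\cdot)$ and the definition of $\bar{x}_K$ collapse the left-hand side to $2\gamma K\,z^\mathsf{T}(\bar{x}_K-y)$, giving
\begin{align*}
2\gamma K\,z^\mathsf{T}(\bar{x}_K-y)&\le\|x_0-y\|^2+2\gamma^2\sum_{k=0}^{K-1}\|\bar{w}_{k+\frac{1}{2}}-\bar{w}_k\|^2\\
&\quad+2\gamma\sum_{k=0}^{K-1}\bar{w}_{k+\frac{1}{2}}^\mathsf{T}(y-x_{k+\frac{1}{2}}).
\end{align*}

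The hard part will be passing to the supremum defining $G(\bar{x}_K)$, since expectation does not commute with $\sup_{y,z}$, so bounding $\sup_{y,z}\mathbb{E}[\cdot]$ is insufficient. The device is to remove every $y$-dependence from the right-hand side before taking the sup: I would split $\bar{w}_{k+\frac{1}{2}}^\mathsf{T}(y-x_{k+\frac{1}{2}})$ into a $y$-linear piece and an $x_{k+\frac{1}{2}}$-piece, bound the $y$-linear contribution by $2\gamma D_T\|\sum_{k}\bar{w}_{k+\frac{1}{2}}\|$ via Cauchy--Schwarz and $\|y\|\le D_T$ (Assumption~\ref{bd6}), and bound $\|x_0-y\|^2\le(\|x_0\|+D_T)^2$. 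The right-hand side is then free of $(y,z)$, so taking $\sup_{y,z}$ on the left yields $2\gamma K\,G(\bar{x}_K)$ dominated by a $(y,z)$-independent random variable.

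I would then take expectations and control three terms. First, $\mathbb{E}[\|\bar{w}_{k+\frac{1}{2}}-\bar{w}_k\|^2]\le2\mathbb{E}[\|\bar{w}_{k+\frac{1}{2}}\|^2]+2\mathbb{E}[\|\bar{w}_k\|^2]$, where batching divides the per-sample variance by $N_k$, giving $\mathbb{E}[\|\bar{w}_{k+\frac{1}{2}}\|^2\mid\mathcal{F}_{k+\frac{1}{2}}]\le(\nu_1^2\|x_{k+\frac{1}{2}}\|^2+\nu_2^2)/N_k$ and likewise for $\bar{w}_k$ from Assumption~\ref{moment}(ii); since $x_{k+\frac{1}{2}}$ lies in the image of $(\mathbf{I}+\gamma B)^{-1}$, hence in $\mathrm{dom}(T)$, Assumption~\ref{bd6} gives $\|x_{k+\frac{1}{2}}\|\le D_T$, and $\sup_k\mathbb{E}[\|x_k\|^2]<\infty$ follows by taking full expectations in Lemma~\ref{egt}, dropping the residual, and iterating $d_{k+1}\le(1+a_k)d_k+c_k$ with $a_k,c_k$ summable (from $\sum_k 1/N_k<M$), so $\sum_k\mathbb{E}[\|\bar{w}_{k+\frac{1}{2}}-\bar{w}_k\|^2]=\mathcal{O}(M)$. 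Second, $\mathbb{E}[\bar{w}_{k+\frac{1}{2}}^\mathsf{T}x_{k+\frac{1}{2}}]=0$, by $\mathcal{F}_{k+\frac{1}{2}}$-measurability of $x_{k+\frac{1}{2}}$ and $\mathbb{E}[\bar{w}_{k+\frac{1}{2}}\mid\mathcal{F}_{k+\frac{1}{2}}]=0$ (Assumption~\ref{moment}(i)). Third, since $\{\bar{w}_{k+\frac{1}{2}}\}$ is a martingale-difference sequence, $\mathbb{E}[\|\sum_k\bar{w}_{k+\frac{1}{2}}\|]\le(\sum_k\mathbb{E}[\|\bar{w}_{k+\frac{1}{2}}\|^2])^{1/2}=\mathcal{O}(\sqrt{M})$. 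The right-hand side is then bounded independently of $K$, so dividing by $2\gamma K$ delivers $\mathbb{E}[G(\bar{x}_K)]=\mathcal{O}(1/K)$, proving (a).

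For (b), with $N_k=\lfloor k^a\rfloor$ and $a>1$ the requirement $\sum_k 1/N_k<\infty$ holds, so (a) applies and $\mathbb{E}[G(\bar{x}_{K+1})]\le\epsilon$ is ensured once $K=\mathcal{O}(1/\epsilon)$. The oracle cost satisfies $\sum_{k=0}^{K}N_k\le\sum_{k=1}^{K}k^a\le\int_{0}^{K+1}t^a\,dt=\mathcal{O}(K^{a+1})$, and substituting $K=\mathcal{O}(1/\epsilon)$ gives the stated $\mathcal{O}(1/\epsilon^{a+1})$ bound, the factor two from sampling both $A_k$ and $A_{k+\frac{1}{2}}$ being absorbed in the constant.
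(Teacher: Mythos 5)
Your proposal is correct and reaches the stated $\mathcal{O}(1/K)$ rate, but you handle the one genuinely delicate step --- decoupling the noise term $\sum_k \bar{w}_{k+\frac{1}{2}}^\mathsf{T}(y-x_{k+\frac{1}{2}})$ from the supremum over $y$ --- by a different device than the paper. The paper introduces an auxiliary (ghost) sequence $u_{k+1} := u_k - \gamma\bar{w}_{k+\frac{1}{2}}$ and rewrites $2\gamma\bar{w}_{k+\frac{1}{2}}^\mathsf{T}(y-x_{k+\frac{1}{2}})$ as a telescoping difference $\|u_k-y\|^2-\|u_{k+1}-y\|^2$ plus $y$-free remainders whose conditional means vanish; you instead isolate the $y$-linear piece, apply Cauchy--Schwarz with $\|y\|\le D_T$, and control $\mathbb{E}\bigl[\bigl\|\sum_k\bar{w}_{k+\frac{1}{2}}\bigr\|\bigr]$ by martingale orthogonality and Jensen. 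Both are standard and both make the right-hand side $(y,z)$-independent before the supremum is taken, which is the essential point; your route avoids introducing $u_0$ and the extra constant $\|u_0\|^2$, at the cost of a $\sqrt{M}$ term that is anyway absorbed into $\mathcal{O}(1/K)$. Two of your subsidiary arguments are in fact cleaner than the paper's: you bound $\|x_{k+\frac{1}{2}}\|\le D_T$ directly from $x_{k+\frac{1}{2}}\in\mathrm{range}\bigl((\mathbf{I}+\gamma B)^{-1}\bigr)=\mathrm{dom}(B)=\mathrm{dom}(T)$ rather than via non-expansiveness of the resolvent, and you obtain $\sup_k\mathbb{E}[\|x_k-x^*\|^2]<\infty$ by iterating the recursion of Lemma~\ref{egt} in full expectation, whereas the paper asserts a deterministic bound $D_*$ from almost-sure convergence, which strictly speaking only yields a random bound. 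Part (b) is identical to the paper's computation. No gaps.
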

\begin{proof}
(a) 
We first define an auxiliary sequence $\{u_k\}$ such that 
$$ u_{k+1}:= u_k - \gamma \bar{w}_{k+\frac{1}{2}}, $$
where $u_0 \in \Real^n$. We may then express the last term on the right in \eqref{eq28b} as follows.
\begin{align}\notag
 2&\gamma \bar{w}_{k+\frac{1}{2}}^\mathsf{T}({y}-x_{k+\frac{1}{2}})  = 2\gamma \bar{w}_{k+\frac{1}{2}}^\mathsf{T}({y}-u_k) \\\notag&+ 2\gamma \bar{w}_{k+\frac{1}{2}}^\mathsf{T}(u_k-x_{k+\frac{1}{2}}) \\
	\notag& = \|u_k-y\|^2 - \|u_{k+1}-y\|^2 + \gamma^2 \|\bar{w}_{k+\frac{1}{2}}\|^2 \\&+ 2\gamma \bar{w}_{k+\frac{1}{2}}^\mathsf{T}(u_k-x_{k+\frac{1}{2}}). \label{eq-gap1} 
\end{align}
Invoking Lemma~\ref{t} and summing over $k$, we have
\begin{align}
\notag\sum_{k=0}^{K-1}2\gamma z^\mathsf{T}(x_{k+\frac{1}{2}}-y)& \le\|x_0-y\|^2+2\gamma^2\sum_{k=0}^{K-1} \|\bar{w}_k-\bar{w}_{k+\frac{1}{2}}\|^2\\&+2\gamma \sum_{k=0}^{K-1} \bar{w}_{k+\frac{1}{2}}^\mathsf{T}({y}-x_{k+\frac{1}{2}}). \label{eq-s2}
\end{align}
Dividing \eqref{eq-s2} by $K$, we obtain the following. 
\begin{align}
\notag& \quad \tfrac{1}{K} \sum_{k=0}^{K-1} 2\gamma z^\mathsf{T}(x_{k+\frac{1}{2}}-y)\le\tfrac{1}{K}\|x_0-y\|^2+ \\&\tfrac{2\gamma^2\sum_{k=0}^{K-1} \|\bar{w}_k-\bar{w}_{k+\frac{1}{2}}\|^2}{K}
  +\tfrac{\sum_{k=0}^{K-1} 2\gamma \bar{w}_{k+\frac{1}{2}}^\mathsf{T}({y}-x_{k+\frac{1}{2}})}{K}. \label{eq-s3}
\end{align}
Using \eqref{eq-gap1} in \eqref{eq-s3} and invoking \eqref{def-ave-smbfs2}, it follows that 
\begin{align*}
& \quad \gamma z^\mathsf{T}(\bar{x}_K-y)\le\tfrac{1}{2K}\|x_0-y\|^2\\\notag&+\tfrac{2\gamma^2\sum_{k=0}^{K-1} \|\bar{w}_k-\bar{w}_{k+\frac{1}{2}}\|^2}{2 K} 
+ \tfrac{\sum_{k=0}^{K-1} 2\gamma\bar{w}_{k+\frac{1}{2}}^\mathsf{T}({y}-x_{k+\frac{1}{2}})}{2K} \\
&  \le\tfrac{1}{2K}\|x_0-y\|^2+\tfrac{2\gamma^2\sum_{k=0}^{K-1} \|\bar{w}_k-\bar{w}_{k+\frac{1}{2}}\|^2}{2K}\\\notag&+\tfrac{\|u_0-y\|^2 + \sum_{k=0}^{K-1} (\gamma^2 \|\bar{w}_{k+\frac{1}{2}}\|^2+ 2\gamma \bar{w}_{k+\frac{1}{2}}^\mathsf{T}(u_k-x_{k+\frac{1}{2}}))}{2 K}.
\end{align*}
Taking supremum over $z \in T(y)$ and $y \in \mathrm{dom}(T)$ and leveraging the compactness of dom$(T)$,  we obtain the following inequality.
\begin{align*}
 & \quad \gamma \sup_{y \in \mathrm{dom}(T)}\sup_{z \in T(y)} z^\mathsf{T}(\bar{x}_K-y)\\
& \le\tfrac{2D_T^2+\|x_0\|^2+\|u_0\|^2}{K}+\tfrac{\gamma^2\sum_{k=0}^{K-1} (2\|\bar{w}_k-\bar{w}_{k+\frac{1}{2}}\|^2+\|\bar{w}_{k+\frac{1}{2}}\|^2)}{2K}
  \\&+ \tfrac{\gamma\sum_{k=0}^{K-1} \bar{w}_{k+\frac{1}{2}}^\mathsf{T}(u_k-x_{k+\frac{1}{2}})}{K}. 
\end{align*}
By invoking the definition of $G(x)$ and letting $D\triangleq 2D_T^2+\|x_0\|^2+\|u_0\|^2$, we obtain the following relation.
\begin{align}
\notag \gamma G(\bar{x}_{K}) & \le\tfrac{D^2}{K}+\tfrac{\gamma^2\sum_{k=0}^{K-1} (2\|\bar{w}_k-\bar{w}_{k+\frac{1}{2}}\|^2+\|\bar{w}_{k+\frac{1}{2}}\|^2)}{2K}\\&+ \tfrac{\gamma\sum_{k=0}^{K-1}  \bar{w}_{k+\frac{1}{2}}^\mathsf{T}(u_k-x_{k+\frac{1}{2}})}{K}. \label{eq-s4}
\end{align}
Before proceeding, we establish bounds for $x_k$ and $x_{k+\frac{1}{2}}$. From Proposition~\ref{ass}, we know $\{x_k\}$ converges to $x^*$ which indicates $\|x_k-x^*\|$ is bounded. We denote this bound by $\|x_k-x^*\| \le D_*$. By definition of $x_{k+\frac{1}{2}}$, it follow that 
\begin{align*}
&\|x_{k+\frac{1}{2}}-x^*\|=\left\|(\mathbf{I}+\gamma B)^{-1}(x_k - \gamma A(x_k)) \right. \\& \left.-(\mathbf{I}+\gamma B)^{-1}(x^* - \gamma A(x^*))\right\| \\
&\le \|(x_k - \gamma A(x_k))-(x^* - \gamma A(x^*))\| \\&\le (1+\gamma L)\|x_k-x^*\| \le (1+\gamma L)D_*,
\end{align*}
where the first inequality follows from that $(\mathbf{I}+\gamma B)^{-1}$ is a non-expansive operator.
Taking expectations on both sides of \eqref{eq-s4}, leads to the following inequality. 
\begin{align} \notag
 \mathbb{E}&[\gamma G(\bar{x}_{K})] \leq  {\tfrac{D^2}{K}}+\tfrac{\gamma^2\sum_{k=0}^{K-1} 2\mathbb{E}[\|\bar{w}_k-\bar{w}_{k+\frac{1}{2}}\|^2]+2\mathbb{E}[\|\bar{w}_{k+\frac{1}{2}}\|^2]}{2K} \notag 
\\ \notag & 
	 + \tfrac{\gamma \sum_{k=0}^{K-1} \mathbb{E}[\bar{w}_{k+\frac{1}{2}}^\mathsf{T}({u_k}-x_{k+\frac{1}{2}})]}{K}  \\\notag &\leq \tfrac{{2D^2}+{\gamma^2}\sum_{k=0}^{K-1} \tfrac{{\nu_1^2(4\|x_k\|^2+6\|x_{k+\frac{1}{2}}\|^2)+10\nu_2^2}}{N_k}}{2K}  
	\\ & \leq \tfrac{{2D^2}+{\gamma^2}\sum_{k=0}^{K-1} \tfrac{{\nu_1^2((8+12(1+\gamma L)^2)D_*^2+20\|x^*\|^2)+10\nu_2^2}}{N_k}}{2K} \label{eq1a}\\
	& \leq \tfrac{2D^2+\gamma^2 M ({{\nu_1^2((8+12(1+\gamma L)^2)D_*^2+20\|x^*\|^2)+10\nu_2^2}})}{2K}
{=\tfrac{\widehat{C}}{K}}, \notag
\end{align}
by defining $\widehat{C} \triangleq (2D^2+\\\gamma^2 M({{\nu_1^2((8+12(1+\gamma L)^2)D_*^2+20\|x^*\|^2)+10\nu_2^2}}))/2$. It follows that $\mathbb{E}[G(\bar{x}_{K})] \leq \widehat{C}/(\gamma K) = \mathcal{O}(1/K).$ \\
(b) For $\epsilon$ sufficiently small and when 
$\widetilde{C}$ is an appropriate constant, the result follows.
\begin{align*}
\sum_{k=0}^KN_k&\le\sum_{k=0}^{\lceil(\widehat{C}/\epsilon)\rceil}N_k\le\sum_{k=0}^{\lceil(\widehat{C}/\epsilon)\rceil}k^a\le\int_{k=0}^{(\widehat{C}/\epsilon)+1}x^a dx \\ &\le\tfrac{((\widehat{C}/\epsilon)+1)^{a+1}}{a+1}\le\left(\tfrac{\widetilde{C}}{\epsilon^{a+1}}\right).
\end{align*}
\end{proof}
\ic{
\noindent {\bf Comment.} A rate statement for the last iterate can also be
derived as well as shown in~\cite{iusem2017extragradient,bot2020mini}. Let
$K_\epsilon \triangleq \inf \{k \ge 1 : \mathbb{E}[r_\gamma^2(x_k)] \le
\epsilon\}$ where finiteness of $K_\epsilon$ can be shown a finite number,
allowing for showing that $\mathbb{E}[r^2_\gamma(x_{K_\epsilon})] \le
\mathcal{O}\left(\tfrac{1}{K_\epsilon}\right)$. Therefore for $K \geq
K_\epsilon$ iterations, we obtain a rate $\mathbb{E}[r^2_\gamma(x_{K})] $.
However, this avenue produces a local rate since we remain unclear regarding
the number of steps required to satisfy $\mathbb{E}[r^2_\gamma(x_{K})] \le
\epsilon$.}

\subsection{Convergence analysis under strongly monotone $A$}\label{sec:4.3}
In this subsection, we conduct an analysis under a strong monotonicity requirement. 
\begin{assumption} \label{smonotone}
The mapping $A$ is $\sigma$-strongly monotone, i.e.,
$(A(x)-A(y))^\mathsf{T}(x-y)\ge \sigma\|x-y\|^2,\quad\forall x,y \in \mathbb{R}^n.$ 
\end{assumption} 
The following lemma is essential to our rate of convergence analysis.
\begin{lemma}\label{inT}
Let Assumptions \ref{lip-mon} and \ref{smonotone} hold. Then the following holds for every $k$.
\begin{align} \label{smon-ineq} \notag
\notag\|x_{k+1}&-x^*\|^2 \leq (1-\sigma\gamma+\gamma^2)\|x_k-x^*\|^2 \\\notag&-(1-2\gamma^2(L^2+\tfrac{1}{2})-2\sigma\gamma)\|x_k-x_{k+\frac{1}{2}}\|^2\\
& +(4\gamma^2+2)\|\bar{w}_{k+\frac{1}{2}}\|^2 +4\gamma^2\|\bar{w}_k\|^2. 
\end{align}
\end{lemma}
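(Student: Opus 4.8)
The plan is to start from the pathwise inequality \eqref{new21} established in the proof of Lemma~\ref{egt}, which bounds $\|x_{k+1}-x^*\|^2$ by $\|x_k-x^*\|^2-(1-2\gamma^2L^2)\|x_k-x_{k+\frac{1}{2}}\|^2$ together with the cross term $-2\gamma(u_{k+\frac{1}{2}}+v_{k+\frac{1}{2}})^\mathsf{T}(x_{k+\frac{1}{2}}-x^*)$ and the noise contributions $2\gamma^2\|\bar{w}_{k+\frac{1}{2}}-\bar{w}_k\|^2-2\gamma\bar{w}_{k+\frac{1}{2}}^\mathsf{T}(x_{k+\frac{1}{2}}-x^*)$. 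The one conceptual change relative to the merely monotone analysis is that, instead of discarding the cross term through the underbraced bound $(u_{k+\frac{1}{2}}+v_{k+\frac{1}{2}})^\mathsf{T}(x_{k+\frac{1}{2}}-x^*)\ge 0$, I would invoke Assumption~\ref{smonotone}. Writing $u^*+v^*=0$ with $u^*=A(x^*)$ and $v^*\in B(x^*)$, the $\sigma$-strong monotonicity of $A$ together with the monotonicity of $B$ gives $(u_{k+\frac{1}{2}}+v_{k+\frac{1}{2}})^\mathsf{T}(x_{k+\frac{1}{2}}-x^*)\ge\sigma\|x_{k+\frac{1}{2}}-x^*\|^2$, so this term contributes $-2\gamma\sigma\|x_{k+\frac{1}{2}}-x^*\|^2$. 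Note that, as signalled by the lemma invoking only Assumptions~\ref{lip-mon} and \ref{smonotone}, this is a purely pathwise estimate: no unbiasedness or moment control on $\bar{w}_k,\bar{w}_{k+\frac{1}{2}}$ is used, consistent with permitting biased oracles in this regime.

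Next I would dispose of the two noise contributions pathwise by Young's inequality. For the squared difference I use $2\gamma^2\|\bar{w}_{k+\frac{1}{2}}-\bar{w}_k\|^2\le 4\gamma^2\|\bar{w}_{k+\frac{1}{2}}\|^2+4\gamma^2\|\bar{w}_k\|^2$, which already reproduces the $4\gamma^2\|\bar{w}_k\|^2$ term and part of the $\|\bar{w}_{k+\frac{1}{2}}\|^2$ term of the claim. The crucial device for the inner product is to split $x_{k+\frac{1}{2}}-x^*=(x_{k+\frac{1}{2}}-x_k)+(x_k-x^*)$ and apply Young's inequality to each piece with weight $\gamma$, yielding $-2\gamma\bar{w}_{k+\frac{1}{2}}^\mathsf{T}(x_{k+\frac{1}{2}}-x^*)\le\gamma^2\|x_k-x^*\|^2+\gamma^2\|x_k-x_{k+\frac{1}{2}}\|^2+2\|\bar{w}_{k+\frac{1}{2}}\|^2$. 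This is exactly what manufactures the $+\gamma^2\|x_k-x^*\|^2$ inside the coefficient $(1-\sigma\gamma+\gamma^2)$, the extra $\gamma^2\|x_k-x_{k+\frac{1}{2}}\|^2$ that combines with $2\gamma^2L^2$ to produce the $L^2+\tfrac12$ factor, and the residual $2\|\bar{w}_{k+\frac{1}{2}}\|^2$ that, added to $4\gamma^2\|\bar{w}_{k+\frac{1}{2}}\|^2$, gives $(4\gamma^2+2)\|\bar{w}_{k+\frac{1}{2}}\|^2$.

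Finally I would transfer the strong-monotonicity term from the half-iterate to the anchor $x_k$. Since $\|x_k-x^*\|^2\le 2\|x_k-x_{k+\frac{1}{2}}\|^2+2\|x_{k+\frac{1}{2}}-x^*\|^2$, one has $\|x_{k+\frac{1}{2}}-x^*\|^2\ge\tfrac12\|x_k-x^*\|^2-\|x_k-x_{k+\frac{1}{2}}\|^2$, and multiplying by the nonnegative factor $2\gamma\sigma$ gives $-2\gamma\sigma\|x_{k+\frac{1}{2}}-x^*\|^2\le-\gamma\sigma\|x_k-x^*\|^2+2\gamma\sigma\|x_k-x_{k+\frac{1}{2}}\|^2$. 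Collecting the coefficients of $\|x_k-x^*\|^2$, of $\|x_k-x_{k+\frac{1}{2}}\|^2$, and of the two noise norms then reproduces \eqref{smon-ineq} verbatim. I expect the main difficulty to be organizational rather than conceptual: the $\sigma$-gain is naturally centred at $x_{k+\frac{1}{2}}$, whereas the recursion must be anchored at $x_k$, so the triangle-inequality transfer unavoidably injects a $+2\gamma\sigma\|x_k-x_{k+\frac{1}{2}}\|^2$ that erodes the negative $\|x_k-x_{k+\frac{1}{2}}\|^2$ coefficient. The care needed is to track the three distinct $\gamma^2$ contributions --- from the Lipschitz step, from the difference-of-noise bound, and from the two Young splits --- so that they land precisely on the stated constants; the inequality itself holds for any $\gamma>0$, with the condition $\gamma\le 1/(2\tilde L)$ mattering only downstream when one needs the $\|x_k-x_{k+\frac{1}{2}}\|^2$ coefficient to remain nonnegative.
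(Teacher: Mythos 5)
Your proposal is correct and follows essentially the same route as the paper's proof: it starts from \eqref{new21}, replaces the discarded cross term by the strong-monotonicity bound $-2\gamma\sigma\|x_{k+\frac{1}{2}}-x^*\|^2$ and transfers it to $x_k$ via $\|x_{k+\frac{1}{2}}-x^*\|^2\ge\tfrac12\|x_k-x^*\|^2-\|x_k-x_{k+\frac{1}{2}}\|^2$ (the paper's \eqref{eq56}), bounds $2\gamma^2\|\bar w_{k+\frac{1}{2}}-\bar w_k\|^2$ by $4\gamma^2(\|\bar w_{k+\frac{1}{2}}\|^2+\|\bar w_k\|^2)$, and splits the noise inner product across $x_{k+\frac{1}{2}}-x_k$ and $x_k-x^*$ with exactly the same Young weights. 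All constants land where the paper puts them, so no changes are needed.
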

\begin{proof}
According to Assumption \ref{smonotone}, we have
\begin{align}
\notag-2&\gamma(u_{k+\frac{1}{2}}+v_{k+\frac{1}{2}})^\mathsf{T}(x_{k+\frac{1}{2}}-x^*)\le-2\gamma\sigma\|x_{k+\frac{1}{2}}-x^*\|^2\\&\le2\gamma\sigma\|x_{k+\frac{1}{2}}-x_k\|^2-\gamma\sigma\|x_k-x^*\|^2. \label{eq56}
\end{align}
Using \eqref{eq56} in \eqref{new21}, we deduce
\begin{align}
\notag&\|x_{k+1}-x^*\|^2\le \|x_k-x^*\|^2-(1-2\gamma^2L^2)\|x_k-x_{k+\frac{1}{2}}\|^2\\\notag&-2\gamma\sigma\|x_{k+\frac{1}{2}}-x^*\|^2+2\gamma^2\|\bar{w}_{k+\frac{1}{2}}-\bar{w}_k\|^2\\\notag&-2\gamma \bar{w}_{k+\frac{1}{2}}^\mathsf{T}(x_{k+\frac{1}{2}}-x^*) \\
\notag &\le \|x_k-x^*\|^2-(1-2\gamma^2L^2)\|x_k-x_{k+\frac{1}{2}}\|^2\\\notag&+2\gamma\sigma\|x_{k+\frac{1}{2}}-x_k\|^2-\gamma\sigma\|x_k-x^*\|^2+2\gamma^2\|\bar{w}_{k+\frac{1}{2}}-\bar{w}_k\|^2\\\notag&-2\gamma \bar{w}_{k+\frac{1}{2}}^\mathsf{T}(x_{k+\frac{1}{2}}-x^*) \\
\notag &\le (1-\sigma\gamma)\|x_k-x^*\|^2-(1-2\gamma^2L^2-2\sigma\gamma)\|x_k-x_{k+\frac{1}{2}}\|^2\\\notag&+2\gamma^2\|\bar{w}_{k+\frac{1}{2}}-\bar{w}_k\|^2-2\gamma \bar{w}_{k+\frac{1}{2}}^\mathsf{T}(x_{k+\frac{1}{2}}-x^*) \\
\notag&\le(1-\sigma\gamma)\|x_k-x^*\|^2-(1-2\gamma^2L^2-2\sigma\gamma)\|x_k-x_{k+\frac{1}{2}}\|^2\\\notag&+4\gamma^2\|\bar{w}_{k+\frac{1}{2}}\|^2+4\gamma^2\|\bar{w}_k\|^2-2\gamma \bar{w}_{k+\frac{1}{2}}^\mathsf{T}(x_{k+\frac{1}{2}}-x^*) \\
\notag& = (1-\sigma\gamma)\|x_k-x^*\|^2-(1-2\gamma^2L^2-2\sigma\gamma)\|x_k-x_{k+\frac{1}{2}}\|^2\\\notag&+4\gamma^2\|\bar{w}_{k+\frac{1}{2}}\|^2+4\gamma^2\|\bar{w}_k\|^2-2\gamma \bar{w}_{k+\frac{1}{2}}^\mathsf{T}(x_{k+\frac{1}{2}}-x_k)\\\notag&-2\gamma \bar{w}_{k+\frac{1}{2}}^\mathsf{T}(x_{k}-x^*) \\ 
\notag& \leq (1-\sigma\gamma+\gamma^2)\|x_k-x^*\|^2+(4\gamma^2+2)\|\bar{w}_{k+\frac{1}{2}}\|^2 \\
\notag& -(1-2\gamma^2(L^2+\tfrac{1}{2})-2\sigma\gamma)\|x_k-x_{k+\frac{1}{2}}\|^2
\notag+4\gamma^2\|\bar{w}_k\|^2.
\end{align}
\end{proof}

\begin{theorem}[{\bf a.s. convergence without unbiasedness}]
Let Assumptions \ref{lip-mon}, \ref{moment}(ii) and \ref{smonotone} hold.
Consider a sequence $\{x_k\}$ generated by ({\bf vr-SMBFS}). Suppose $N_0 \geq
\tfrac{2(24 \gamma^2+8)\nu_1^2}{\sigma \gamma}$, $\gamma < \min\left\{ \tfrac{\sigma}{4}, \tfrac{1}{20\sigma}, \tfrac{\sqrt{7}}{4\tilde L}\right\}$, $\{N_k\}$ is a
non-decreasing sequence, and
$\sum_{k} \tfrac{1}{N_k} < \infty$, ${\tilde L}^2= L^2 + \tfrac{1}{2}$.
Then $\{x_k\}$ converges to $x^*$ in an a.s. sense. 
\end{theorem}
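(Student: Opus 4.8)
The plan is to reduce the claim to a single application of the almost-sure recursion of Lemma~\ref{as-recur}, using the one-step energy inequality of Lemma~\ref{inT} as the engine. First I would note that under Assumption~\ref{smonotone} together with the maximal monotonicity of $B$ (Assumption~\ref{lip-mon}), the map $T=A+B$ is strongly monotone and maximal monotone, so $\mathcal{X}^*=T^{-1}(0)$ is a singleton $\{x^*\}$; the whole argument is carried out for this fixed $x^*$, and establishing $\|x_k-x^*\|^2\to0$ a.s.\ is exactly the desired conclusion.

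The conceptual crux is structural. The bound in Lemma~\ref{inT} carries \emph{no} linear (first-order) noise terms in $\bar w_k$ or $\bar w_{k+\frac12}$: those cross terms were already absorbed via Young's inequality inside the proof of Lemma~\ref{inT}, leaving only the quadratic contributions $(4\gamma^2+2)\|\bar w_{k+\frac12}\|^2$ and $4\gamma^2\|\bar w_k\|^2$. Consequently, when I take conditional expectations — first on $\mathcal{F}_{k+\frac12}$ for the half-step term and then on $\mathcal{F}_k$ via the tower property — I never need the zero-mean property, so Assumption~\ref{moment}(i) is not used; only the state-dependent second-moment bounds of Assumption~\ref{moment}(ii) applied to the batch estimators are invoked, yielding quantities of order $(\nu_1^2\|\cdot\|^2+\nu_2^2)/N_k$. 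This is precisely what legitimizes the biased-oracle setting.

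Next I would homogenize the state-dependent terms, replacing $\|x_k\|^2\le 2\|x_k-x^*\|^2+2\|x^*\|^2$ and controlling $\|x_{k+\frac12}\|^2$ through the non-expansiveness estimate $\|x_{k+\frac12}-x^*\|\le(1+\gamma L)\|x_k-x^*\|$ already established in Proposition~\ref{gapT}. The coefficient $-(1-2\gamma^2(L^2+\tfrac12)-2\sigma\gamma)$ of $\|x_k-x_{k+\frac12}\|^2$ is nonpositive under $\gamma<\sqrt7/(4\tilde L)$ and $\gamma<1/(20\sigma)$, so that term is discarded, and collecting the $\|x_k-x^*\|^2$ coefficients gives
\begin{align*}
\mathbb{E}[\|x_{k+1}-x^*\|^2\mid\mathcal{F}_k]\le\Big(1-\sigma\gamma+\gamma^2+\tfrac{c_1\nu_1^2}{N_k}\Big)\|x_k-x^*\|^2+\tfrac{c_2(\nu_1^2\|x^*\|^2+\nu_2^2)}{N_k},
\end{align*}
where $c_1$ is exactly the $24\gamma^2+8$ entering the hypothesis on $N_0$. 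Writing this as $\mathbb{E}[\|x_{k+1}-x^*\|^2\mid\mathcal{F}_k]\le(1-a_k)\|x_k-x^*\|^2+b_k$ with $a_k=\sigma\gamma-\gamma^2-c_1\nu_1^2/N_k$ and $b_k=c_2(\nu_1^2\|x^*\|^2+\nu_2^2)/N_k$, I would verify the three hypotheses of Lemma~\ref{as-recur}: the threshold $N_0\ge 2(24\gamma^2+8)\nu_1^2/(\sigma\gamma)$ forces $c_1\nu_1^2/N_k\le\sigma\gamma/2$, while $\gamma<\sigma/4$ gives $\gamma^2\le\sigma\gamma/4$, so $a_k\ge\sigma\gamma/4>0$ and $\sum_k a_k=\infty$; with $\gamma<1/(20\sigma)$ one also has $a_k\in[0,1]$. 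Since $\{N_k\}$ is non-decreasing with $\sum_k 1/N_k<\infty$, we get $N_k\to\infty$, hence $b_k\to0$, $\sum_k b_k<\infty$, and $b_k/a_k\to0$. Lemma~\ref{as-recur} then delivers $x_k\to x^*$ almost surely.

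The main obstacle I anticipate is the bookkeeping in the homogenization step: producing clean, matching constants so that the perturbation $c_1\nu_1^2/N_k$ of the contraction factor is provably dominated by $\sigma\gamma$ through the precise $N_0$ threshold, while simultaneously respecting the three step-size constraints $\gamma<\sigma/4$, $\gamma<1/(20\sigma)$, and $\gamma<\sqrt7/(4\tilde L)$. By contrast, the genuinely new content is entirely upstream, in the observation that Lemma~\ref{inT} contains no first-order noise term, which is what makes a.s.\ convergence survive the removal of the unbiasedness requirement.
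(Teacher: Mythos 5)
Your proposal follows essentially the same route as the paper: take conditional expectations of the bound in Lemma~\ref{inT} (whose noise cross terms have already been absorbed by Young's inequality, which is exactly why Assumption~\ref{moment}(i) is dispensable), homogenize the state-dependent second moments via $\|x_k\|^2\le 2\|x_k-x^*\|^2+2\|x^*\|^2$, use $N_k\ge N_0$ together with the stated threshold on $N_0$ to keep the contraction factor at or below $1-\tfrac{\sigma\gamma}{4}$, and close with Lemma~\ref{as-recur}. The only cosmetic difference is that the paper controls $\|x_{k+\frac12}\|^2$ by $2\|x_{k+\frac12}-x_k\|^2+2\|x_k\|^2$ and absorbs the first piece into the nonpositive coefficient of $\|x_k-x_{k+\frac12}\|^2$, rather than invoking the resolvent non-expansiveness bound from Proposition~\ref{gapT} (which, as you state it, would also need a $\gamma\|\bar w_k\|$ correction since $x_{k+\frac12}$ is computed from the sampled $A_k$, not from $A(x_k)$).
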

\begin{proof}
By taking conditional expectations on both sides of \eqref{smon-ineq}, we obtain the following relation by invoking Assumption~\ref{moment}(ii) and defining {$\tilde{L}^2 = L^2+\tfrac{1}{2}$.}  
\begin{align*}
\mathbb{E}&[\|x_{k+1}-x^*\|^2 \mid \mathcal F_k]  \leq (1-\sigma\gamma+\gamma^2)\|x_k-x^*\|^2\\&-(1-2\gamma^2{\tilde L}^2-2\sigma\gamma)\|x_k-x_{k+\frac{1}{2}}\|^2\\
& +(4\gamma^2+2)\mathbb{E}[\|\bar{w}_{k+\frac{1}{2}}\|^2 \mid \mathcal{F}_k] +4\gamma^2\mathbb{E}[\|\bar{w}_k\|^2 \mid \mathcal{F}_k] \\
& \leq (1-\sigma\gamma+\gamma^2)\|x_k-x^*\|^2\\&-(1-2\gamma^2{\tilde L}^2-2\sigma\gamma)\|x_k-x_{k+\frac{1}{2}}\|^2+4\gamma^2\mathbb{E}[\|\bar{w}_k\|^2 \mid \mathcal{F}_k]\\
& +(4\gamma^2+2)\mathbb{E}[\mathbb{E}[\|\bar{w}_{k+\frac{1}{2}}\|^2\mid \mathcal{F}_{k+\frac{1}{2}}] \mid \mathcal{F}_k]  \\
& \leq (1-\sigma\gamma+\gamma^2)\|x_k-x^*\|^2\\&-(1-2\gamma^2{\tilde L}^2-2\sigma\gamma)\|x_k-x_{k+\frac{1}{2}}\|^2\\
& +\tfrac{(4\gamma^2+2)(\nu_1^2\|x_{k+\frac{1}{2}}\|^2 + \nu_2^2)}{N_{k}}+\tfrac{4\gamma^2(\nu_1^2 \|x_k\|^2 + \nu_2^2)}{N_k}.
\end{align*}
We now derive bounds on the last two terms, leading to the following inequality. 
\begin{align}
\notag\mathbb{E}&[\|x_{k+1}-x^*\|^2 \mid \mathcal F_k]   \leq (1-\sigma\gamma+\gamma^2)\|x_k-x^*\|^2\\\notag&-(1-2\gamma^2{\tilde L}^2-2\sigma\gamma)\|x_k-x_{k+\frac{1}{2}}\|^2\\ \notag
& +\tfrac{(4\gamma^2+2)(\nu_1^2\|x_{k+\frac{1}{2}}\|^2 + \nu_2^2)}{N_{k}}+\tfrac{4\gamma^2(\nu_1^2 \|x_k\|^2 + \nu_2^2)}{N_k}\\
	&\notag \leq (1-\sigma\gamma+\gamma^2)\|x_k-x^*\|^2\\\notag&-(1-2\gamma^2{\tilde L}^2-2\sigma\gamma)\|x_k-x_{k+\frac{1}{2}}\|^2\\ \notag
& +\tfrac{(4\gamma^2+2)(2\nu_1^2\|x_{k+\frac{1}{2}}-x_k\|^2)}{N_{k}}+\tfrac{(12\gamma^2+4)\nu_1^2 \|x_k\|^2 + (12\gamma^2 + 4) \nu_2^2)}{N_k}\\
	&\notag \leq (1-\sigma\gamma+\gamma^2)\|x_k-x^*\|^2\\\notag&-(1-2\gamma^2{\tilde L}^2-2\sigma\gamma)\|x_k-x_{k+\frac{1}{2}}\|^2\\ \notag
& +\tfrac{(8\gamma^2+4)(\nu_1^2\|x_{k+\frac{1}{2}}-x_k\|^2)}{N_{k}}\\\notag&+\tfrac{(24\gamma^2+8)\nu_1^2 (\|x_k-x^*\|^2 + \|x^*\|^2) + (12\gamma^2 + 4) \nu_2^2)}{N_k}\\
	&\notag \leq \left(1-\sigma\gamma+\gamma^2+\tfrac{(24\gamma^2+8)\nu_1^2}{N_0}\right)\|x_k-x^*\|^2\\ \notag
	& -\left(1-2\gamma^2{\tilde L}^2-2\sigma\gamma-\tfrac{(8\gamma^2+4)\nu_1^2}{N_{0}}\right)\|x_k-x_{k+\frac{1}{2}}\|^2+\delta_k \\ \notag
&\notag \leq \left(1-\sigma\gamma+\gamma^2+\tfrac{(24\gamma^2+8)\nu_1^2}{N_0}\right)\|x_k-x^*\|^2\\ \notag
	& -\left(1-2\gamma^2\tilde L^2-2\sigma\gamma-\tfrac{(24\gamma^2+8)\nu_1^2}{N_0}\right)\|x_k-x_{k+\frac{1}{2}}\|^2+\delta_k\\ \notag
& \leq \left(1-\tfrac{1}{2}\sigma\gamma+\gamma^2\right)\|x_k-x^*\|^2 \\&-\left(1-2\gamma^2\tilde{L}^2-\tfrac{5}{2}\sigma\gamma\right)\|x_k-x_{k+\frac{1}{2}}\|^2 + \delta_k, \label{ineq-ce}
\end{align}
where $\delta_k \triangleq \tfrac{(24\gamma^2+8)\nu_1^2 \|x^*\|^2 + (12\gamma^2 + 4) \nu_2^2}{N_k}$ and the final inequality follows from $N_0 \geq \tfrac{2(24 \gamma^2+8)\nu_1^2}{\sigma \gamma}$. We observe that 
\begin{align*}
 &\left(1- \tfrac{1}{2}\sigma\gamma+\gamma^2\right) = \left(1-\gamma(\tfrac{\sigma}{2} - \gamma)\right) \overset{\tiny \gamma < \frac{\sigma}{4}}{ < } \left( 1-\tfrac{\gamma\sigma}{4} \right) < 1\\
  &\left(1- \tfrac{1}{2}\sigma\gamma+\gamma^2\right) \overset{\tiny \gamma < \frac{2}{\sigma}}{ > } (1-1+\gamma^2) >0\\
 &{-}\left(1-2\gamma^2\tilde{L}^2-\tfrac{5}{2}\sigma\gamma\right) \overset{\tiny \gamma < \frac{1}{20\sigma}}{<} -\left(\tfrac{7}{8}-2\gamma^2\tilde{L}^2\right)  \overset{\tiny \gamma^2 < \frac{7}{16\tilde{L}^2}}{ < } 0.  
\end{align*}
In other words, if $\gamma < \min\left\{ \tfrac{\sigma}{4}, \tfrac{1}{20\sigma}, \tfrac{\sqrt{7}}{4\tilde L}\right\}$, \eqref{ineq-ce} can be further bounded by $(1-a_k)\|x_k-x^*\|^2 + \delta_k$ where $a_k = \tfrac{\gamma \sigma}{4}$ for all $k$ and  $a_k,\delta_k$ satisfy Lemma~\ref{as-recur}. Consequently,  $\|x_k-x^*\|^2 \to 0$ in an a.s. sense as $k \to \infty$.  

\end{proof}
Next we provide rate and complexity statements involving ({\bf vr-SMFBS}) under geometrically increasing $N_k$.
\begin{proposition}[{\bf Linear convergence}] \label{rateTvs} 
Let Assumptions~\ref{lip-mon}, \ref{moment}(ii) and \ref{smonotone} hold. Consider a sequence $\{x_k\}$ generated by ({\bf vr-SMBFS}). Suppose $N_0 \geq
\tfrac{2(24 \gamma^2+8)\nu_1^2}{\sigma \gamma}$, $\gamma < \min\left\{ \tfrac{\sigma}{4}, \tfrac{1}{20\sigma}, \tfrac{\sqrt{7}}{4\tilde L}\right\},
\sum_{k} \tfrac{1}{N_k} < \infty$, ${\tilde L}^2= L^2 + \tfrac{1}{2}$, $\|x^0-x^*\| \leq D_0$ and $N_k \triangleq N_0\lfloor \rho^{-(k+1)} \rfloor$ for all $k > 0$.
Then the following hold.

\noindent (a) Suppose $q\triangleq (1-\tfrac{\sigma\gamma}{4}) < 1$. Then $\mathbb{E}[\|x_k-x^*\|^2] \leq \tilde{D} \tilde{\rho}^k$ where $\tilde D > 0$  and $\tilde \rho = \max\{q,\rho\}$ if $q \neq \rho$ and $\tilde \rho \in (q,1)$ if $q = \rho$.

\noindent (b) Suppose $x_{K+1}$ is such that $\mathbb{E}[\|x_{K+1}-x^*\|^2] \le \epsilon$. Then the oracle complexity is $\sum_{k=0}^KN_k \le \mathcal{O}\left(\frac{1}{\epsilon}\right).$
\end{proposition}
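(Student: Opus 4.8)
The plan is to collapse the vector recursion into a scalar one and then bound a convolution of two geometric sequences.

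First I would start from the one-step inequality \eqref{ineq-ce} established in the proof of the preceding theorem, which already incorporates Lemma~\ref{inT} and Assumption~\ref{moment}(ii):
$$\mathbb{E}[\|x_{k+1}-x^*\|^2\mid\mathcal{F}_k]\le\Big(1-\tfrac12\sigma\gamma+\gamma^2\Big)\|x_k-x^*\|^2-\Big(1-2\gamma^2\tilde L^2-\tfrac52\sigma\gamma\Big)\|x_k-x_{k+\frac12}\|^2+\delta_k,$$
with $\delta_k\triangleq\tfrac{(24\gamma^2+8)\nu_1^2\|x^*\|^2+(12\gamma^2+4)\nu_2^2}{N_k}$. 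Under $\gamma<\min\{\sigma/4,\,1/(20\sigma),\,\sqrt7/(4\tilde L)\}$ the bracket $1-2\gamma^2\tilde L^2-\tfrac52\sigma\gamma$ is positive (as verified in the preceding proof), so that whole term is nonpositive and may be dropped, while $1-\tfrac12\sigma\gamma+\gamma^2\le 1-\tfrac{\sigma\gamma}{4}=q<1$ by $\gamma<\sigma/4$. Setting $e_k\triangleq\mathbb{E}[\|x_k-x^*\|^2]$ and taking total expectations yields the scalar recursion $e_{k+1}\le q\,e_k+\delta_k$ with $e_0\le D_0^2$. Since $N_k=N_0\lfloor\rho^{-(k+1)}\rfloor\ge c\,\rho^{-(k+1)}$ for a constant $c>0$, the driving term is geometrically small, $\delta_k\le C'\rho^k$, where $C'$ absorbs $\nu_1,\nu_2,\|x^*\|,\gamma,N_0,c$.

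For part (a) I would unroll the recursion to obtain $e_K\le q^K D_0^2+C'\sum_{j=0}^{K-1}q^{K-1-j}\rho^j=q^KD_0^2+C'q^{K-1}\sum_{j=0}^{K-1}(\rho/q)^j$ and bound the geometric partial sum by cases. When $q\neq\rho$ the sum equals $\tfrac{(\rho/q)^K-1}{(\rho/q)-1}$, so the whole expression is of order $\max\{q,\rho\}^K$, giving $\tilde\rho=\max\{q,\rho\}$; when $q=\rho$ the sum equals $K$, producing $O(Kq^K)$, which is dominated by $\tilde D\tilde\rho^K$ for any fixed $\tilde\rho\in(q,1)$ because $Kq^K/\tilde\rho^K\to0$. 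Folding all constants into $\tilde D$ gives $e_K\le\tilde D\tilde\rho^K$, which is claim (a).

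For part (b) I would exploit that the cumulative oracle count is a geometric series dominated by its last term, $\sum_{k=0}^K N_k\le N_0\sum_{k=0}^K\rho^{-(k+1)}\le\tfrac{N_0}{1-\rho}\rho^{-(K+1)}$. Taking the smallest $K$ for which the rate bound of (a) forces $\tilde D\tilde\rho^{K+1}\le\epsilon$ and substituting, the terminal batch already satisfies $N_K=\Theta(1/\epsilon)$ in the balanced regime $\tilde\rho=\rho$ (the schedule $\rho$ chosen no slower than the contraction $q$), so $\rho^{-(K+1)}=O(1/\epsilon)$ and therefore $\sum_{k=0}^K N_k=O(1/\epsilon)$. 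The main obstacle is not analytical difficulty but bookkeeping precision in two places: the resonant case $q=\rho$ in the convolution, which I resolve by conceding an arbitrarily small slack in the rate ($\tilde\rho\in(q,1)$ rather than $\tilde\rho=q$); and, in (b), making the dependence of $\rho^{-(K+1)}$ on $\epsilon$ explicit, where the clean $O(1/\epsilon)$ hinges on the sampling rate being matched to the deterministic linear rate so that the total sample budget stays a constant multiple of the final batch.
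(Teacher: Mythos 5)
Your proposal is correct and follows essentially the same route as the paper: take total expectations of the one-step inequality \eqref{ineq-ce} to get $e_{k+1}\le q\,e_k+D/N_k$, use $N_k\ge \tfrac{N_0}{2}\rho^{-(k+1)}$ to reduce to a convolution of geometric sequences, split into the cases $q<\rho$, $\rho<q$, $q=\rho$ (handling the resonant case by conceding $\tilde\rho\in(q,1)$, exactly as the paper does via its cited lemma), and bound the cumulative sample count by the geometric tail $\tfrac{N_0}{1-\rho}\rho^{-(K+1)}$. Your closing observation that the clean $\mathcal{O}(1/\epsilon)$ in part (b) genuinely requires the sampling rate to be matched to the contraction factor (i.e., the $\tilde\rho=\rho$ regime) is, if anything, slightly more careful than the paper, which works out only the case $q<\rho$ and asserts the others are similar.
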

\begin{proof}
(a) 
By taking unconditional expectations on both sides of \eqref{ineq-ce}, we obtain
\begin{align}
\mathbb{E}[\|x_{k+1}-x^*\|^2] & \le q\mathbb{E}[\|x_k-x^*\|^2]+\tfrac{D}{N_k}, \label{eq58v}
\end{align}
where $D\triangleq (24\gamma^2+8)\nu_1^2 \|x^*\|^2 + (12\gamma^2 + 4) \nu_2^2$ and $q\triangleq (1-\tfrac{\sigma\gamma}{4})$.
Recall that $N_k$ can be bounded as seen next.
\begin{align}
N_k= N_0\lfloor \rho^{-(k+1)} \rfloor \ge N_0\left\lceil \tfrac{1}{2}\rho^{-(k+1)} \right\rceil \ge \tfrac{N_0}{2}\rho^{-(k+1)}. \label{eq59p-t}
\end{align}
We now consider three cases.

\noindent  (i): $q<\rho<1$. Using \eqref{eq59p-t} in \eqref{eq58v} and defining $\bar D \triangleq \tfrac{2D}{N_0}$ and $\tilde D \triangleq D_0+\bar D$, we obtain
\begin{align}
\notag\mathbb{E}[\|&x_{k+1}-x^*\|^2] \le q\mathbb{E}[\|x_k-x^*\|^2]+\tfrac{D}{N_{k}} \\\notag&\leq q\mathbb{E}[\|x_k-x^*\|^2]+\tfrac{2D}{N_0}\rho^{k+1} \\
\notag&\le q^{k+1}\|x_0-x^*\|+\bar D\sum_{j=1}^{k+1}q^{k+1-j}\rho^j 
\\\notag&\le D_0q^{k+1}+\bar D\rho^{k+1}\sum_{j=1}^{k+1}(\tfrac{q}{\rho})^{k+1-j}\le \tilde{D}\rho^{k+1}.
\end{align}

\noindent  (ii): $\rho<q<1$. Akin to (i) and defining $\tilde D$ apprioriately,  $\mathbb{E}[\|x_{k+1}-x^*\|^2] \le\tilde{D}q^{k+1}$. \\

\noindent (iii): $\rho=q<1$. 
If $\tilde{\rho} \in (q,1)$ and $\widehat{D} > \tfrac{1}{\ln(\tilde{\rho}/q)^e}$, proceeding similarly we obtain
\begin{align}
\notag\mathbb{E}[&\|x_{k+1}-x^*\|^2] \le q^{k+1}\mathbb{E}[\|x_0-x^*\|^2]+\bar D\sum_{j=1}^{k+1}q^{k+1} \\\notag&\le D_0q^{k+1}+\bar D\sum_{j=1}^{k+1}q^{k+1}= D_0q^{k+1}+\bar D(k+1)q^{k+1} \\\notag
&\overset{\tiny \mbox{\cite[Lemma~4]{ahmadi2016analysis}}}{\le}  \tilde{D} \tilde{\rho}^{k+1}, \mbox{ where } \tilde{D} \triangleq (D_0+\bar D \cdot \widehat{D}). 
\end{align}
\noindent Thus, $\{x_k\}$ converges linearly in an expected-value sense. \\ 
(b) Case (i): If $q<\rho<1$. From (a), it follows that
\begin{align*}
\mathbb{E}[\|x_{K+1}-x^*\|^2] &\le \tilde{D}\rho^{K+1}\leq \  \epsilon \Longrightarrow  K \ge  \log_{1/\rho}(\tilde{D}/\epsilon) - 1 .
\end{align*} 
If $K = \lceil \log_{1/\rho}(\tilde{D}/\epsilon)\rceil - 1$, then ({\bf vr-SMFBS}) requires $\sum_{k=0}^KN_k$ evaluations. Since $N_k=N_0\lfloor \rho^{-(k+1)} \rfloor \le N_0\rho^{-(k+1)}$, then we have 
\begin {align*}
 \quad \sum_{k=0}^{\lceil \log_{1/\rho }(\tilde{D}/\epsilon)\rceil -1}&N_0\rho^{-(k+1)}  =
\sum_{t=1}^{\lceil \log_{1/\rho }(\tilde{D}/\epsilon)\rceil}N_0\rho^{-t} \\
& \le \tfrac{N_0}{\rho^2\left(\tfrac{1}{\rho}-1\right)}\left(\tfrac{1}{\rho}\right)^{\lceil \log_{1/\rho}(\tilde{D}/\epsilon)\rceil}\\ 
& \le
\tfrac{N_0}{\rho\left(\tfrac{1}{\rho}-1\right)}\left(\tfrac{1}{\rho}\right)^{
\log_{1/\rho}(\tilde{D}/\epsilon)+1} \\
 &\le \tfrac{N_0}{\left(1-\rho\right)}\left(\tfrac{1}{\rho}\right)^{\log_{1/\rho}(\tilde{D}/\epsilon)}
  \le \tfrac{N_0}{(1-\rho)}\left(\tfrac{\tilde{D}}{\epsilon}\right).
\end{align*}
We omit cases (ii) and (iii) which lead to similar complexities.
\end{proof}

\noindent {\bf Remark.} We comment on our findings next.\\ 
\noindent (a)  {\em Rates and asymptotics.} We believe that the findings fill important gaps in terms of providing rate statements for monotone inclusions. In particular, the rate statement in monotone regimes relies on utilizing a lesser known gap function while the variance-reduced schemes achieve deterministic rates of convergence. In addition, the oracle complexities are near-optimal.\\ 
\noindent (b) {\em Algorithm parameters.} Akin to more traditional first-order schemes, these schemes rely on utilizing constant steplengths and leverage problem parameters such as Lipschitz and strong monotonicity constants. We believe that by using diminishing steplength sequences, we may be able to derive weaker rate statements that do not rely on problem parameters.\\
\noindent (c) {\em Expectation-valued $B$.} We may consider a setting where $B$ is expectation-valued and the resolvent operation is approximated via stochastic approximation. 

\section{Numerical Results}\label{sec:numerics}
{In this section, we apply the proposed schemes on a 
2-stage SVI problem described in Section~\ref{sec:app} (Example b).} \\
\noindent {\em Problem parameters for 2-stage SVI.} We
generate a set of $J$ i.i.d samples $\{\xi_i\}^J_{i=1}$,
where $\xi_i \sim U[-5,0]$. Suppose $h_i(\omega)=\xi_i$ {for
$i=1,\cdots, J$}. In
addition,
$c_i(x_i)=\tfrac{1}{2}m_ix_i^2+\ell_i{x_i}$, $M\in\mathbb{R}^{J\times J}$ is a diagonal matrix with
nonnegative elements
$M = \mbox{diag}(m_1,\dots,m_J)$ while
$\ell=[\ell_1,\dots,\ell_J]^\mathsf{T}\in\mathbb{R}^J$ where $\ell_i \in U(2,3)$. Furthermore, the inverse demand function
$p$ is defined as $p(X) = d - rX$ where $d = 1$ and $r = 1$.
Thus $G(x)$, as
defined in Section 1.1 (b), can be simplified as $G(x)=Mx+\ell$. In this setting, $A(x)=Mx+\ell+R(x)+D^\epsilon(x)$ and $B(x)=\mathcal{N}_\mathcal{X}(x)$, where $\mathcal{X} \triangleq \mathbb{R}_J^+$. The Lipschitz of $A$ is given by $L = L_B + L_R + L_D^{\epsilon}$ where  $L_B = \max_{i} m_i$, $L_R = r\| \mathbf{I}+ {\bf 1}{\bf 1}^\mathsf{T}\|$ and $L_D^{\epsilon} = \tfrac{1}{\epsilon}$. All the schemes are implemented in MATLAB on a PC with 16GB RAM and 6-Core Intel Core i7 processor
(2.6GHz).

\begin{figure}[htbp]
\begin{center}
\includegraphics[width=.4\textwidth]{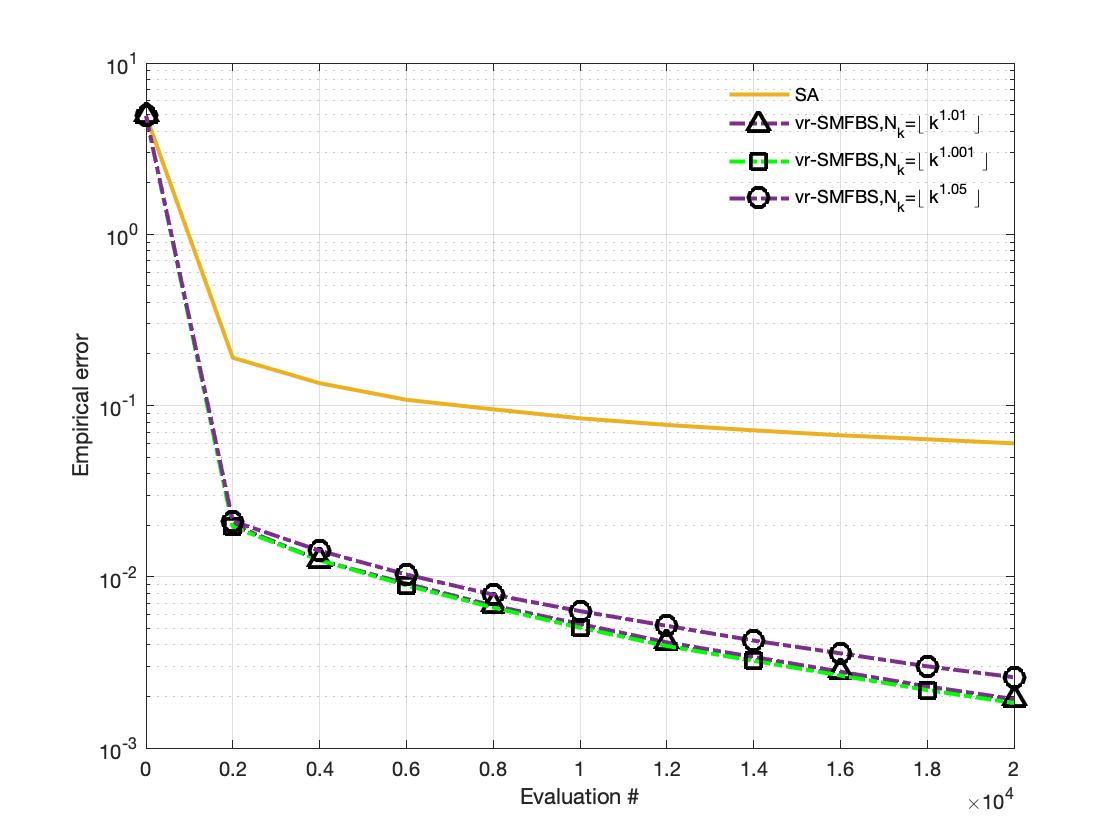}
\includegraphics[width=.4\textwidth]{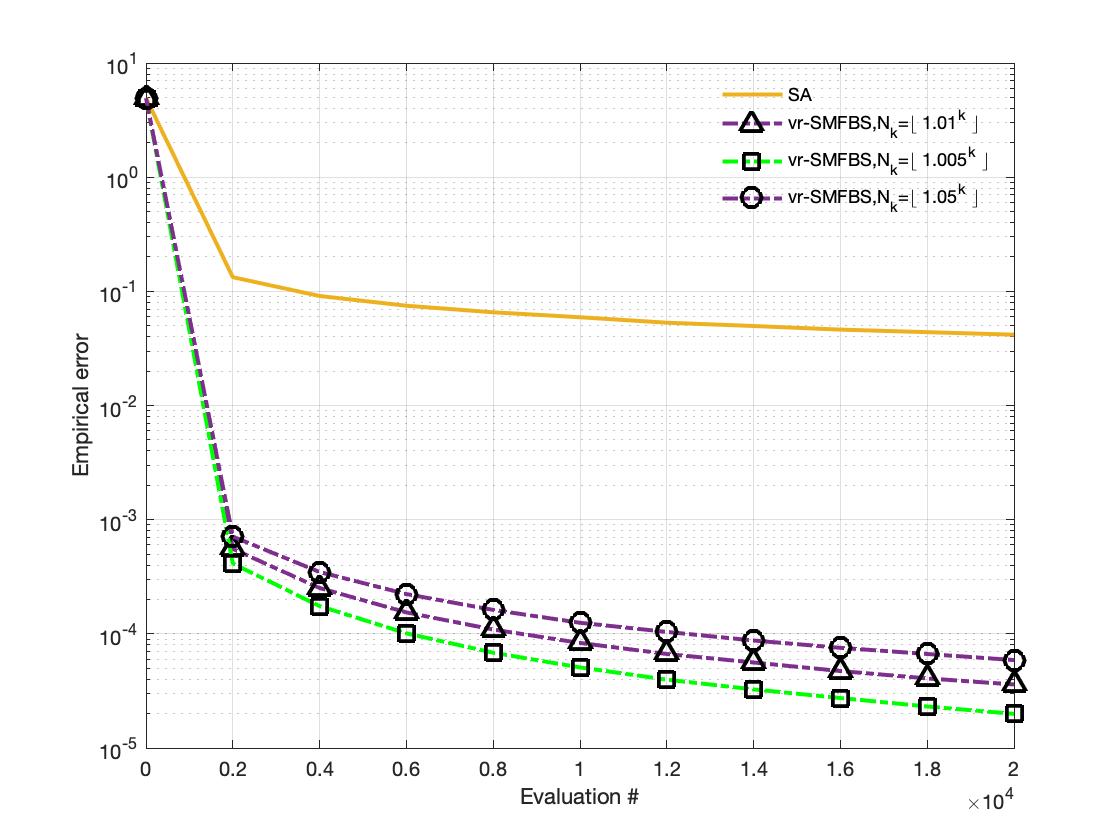}
\end{center}
\caption{Trajectories for (SA) and (vr-SMBFS)  (up: monotone; down: s-monotone)}
\label{pro1}
\end{figure}

We describe the three schemes being compared and specify their algorithm parameters. Solution quality is compared by estimating the residual function  $\texttt{res}(x)=\|x-\Pi_\mathcal{X}(x-\gamma A(x))\|$.

\subsection{Algorithm specifications.} 

\noindent {(i) {\bf (SA)}: Stochastic approximation scheme.}  {The {\bf (SA)} scheme utilizes the following update.
\begin{align}\tag{SA} x_{k+1}:= \Pi_{\mathcal{X}} \left[x_k - \gamma_k A(x_k,\omega_k)\right],\end{align}
where $A(x_k) = \mathbb{E}[A(x_k,\omega_k)]$, and $\gamma_k \triangleq \tfrac{1}{\sqrt{k}}$. $x_0$ is randomly generated in $[0,1]^J$.

\noindent {(ii) {\bf (vr-SMFBS)}: Variance-reduction stochastic modified forward-backward scheme.} We choose a constant $\gamma=\tfrac{1}{4L}$ which satisfies the steplength assumption and we assume $N_k=\lfloor k^{1.01}\rfloor$ for merely monotone problems, $N_k=\lfloor 1.01^{k+1}\rfloor$ for strongly monotone problems.

\subsection{Performance comparison and insights.}
In Fig. \ref{pro1}, we compare both schemes under mere monotonicity and
strong monotonicity, respectively and examine sensitivities to
the sample growth rate. Standard SA schemes may struggle when the
problem is ill-conditioned and we examine the performance of the schemes in
such regimes and provide the results in  for merely monotone and strongly monotone settings in Tables
\ref{cpb1} and \ref{cpb2}, respectively.

\begin{table}[htbp]
\scriptsize
\caption{Comparison of ({\bf vr-SMFBS}) with (SA)}
\begin{center}
    \begin{tabular}[t]{ c | c | c | c | c | c | c }
    \hline
    \multicolumn{7}{c} {merely monotone, 20000 evaluations} \\ \hline
    \multirow{2}{*}{$L$} & \multicolumn{3}{c|} {vr-SMFBS} & \multicolumn{3}{c} {SA} \\ \cline{2-7}
      & error & time & CI & error & time & CI  \\ \hline
      1e1 & 1.6e-3 & 2.6 & [1.3e-3,1.8e-3] & 5.3e-2 & 2.7 & [5.0e-2,5.7e-2]  \\ \hline
      1e2 & 1.9e-3 & 2.6 & [1.6e-3,2.1e-3] & 6.1e-2 & 2.7 & [5.8e-2,6.4e-2]  \\ 
 \hline
      1e3 & 2.2e-3 & 2.6 & [2.0e-3,2.5e-3] & 7.6e-2 & 2.5 & [7.3e-2,7.9e-2] \\ 
 \hline
 1e4 & 5.9e-3 & 2.6 & [5.4e-3,6.2e-3] & 9.4e-2 & 2.6 & [9.0e-1,9.7e-1] \\ 
 \hline
  \hline
  \multicolumn{7}{c} {Complicated $\mathcal X$, merely monotone, 2000 evaluations} \\ \cline{1-7}
 1e2 & 1.9e-3 & 6.8 & [1.6e-3,2.0e-3] & 6.0e-2 & 232 & [5.7e-2,6.3e-2] \\ 
 \cline{1-7}
      \end{tabular}
\end{center}
\label{cpb1}
\end{table}

\begin{table}[!htb]
\scriptsize
\caption{Comparison of ({\bf vr-SMFBS}) with (SA)}
\begin{center}
    \begin{tabular}[t]{ c | c | c | c | c | c | c }
    \hline
    \multicolumn{7}{c} {strongly monotone, 20000 evaluations} \\ \hline
    \multirow{2}{*}{$L$} & \multicolumn{3}{c|} {vr-SMFBS} & \multicolumn{3}{c} {SA} \\ \cline{2-7}
      & error & time & CI & error & time & CI  \\ \hline
      1e1 & 1.5e-5 & 2.6 & [1.2e-5,1.7e-5] & 2.9e-2 & 2.5 & [2.7e-2,3.1e-2]  \\ \hline
      1e2 & 3.6e-5 & 2.5 & [3.3e-5,3.9e-5] & 4.1e-2 & 2.5 & [3.8e-2,4.4e-2]  \\ 
 \hline
      1e3 & 5.6e-5 & 2.5 & [4.2e-6,4.7e-6] & 5.5e-2 & 2.4 & [5.2e-2,5.7e-2]  \\ 
 \hline
      1e4 & 7.4e-5 & 2.5 & [7.1e-5,7.7e-5] & 6.0e-2 & 2.5 & [5.7e-2,6.3e-2]  \\ 
 \hline
  \hline
  \multicolumn{7}{c} {Complicated $\mathcal X$, strongly monotone, 2000 evaluations} \\ \cline{1-7}
 1e3 & 5.6e-5 & 18 & [4.2e-6,4.7e-6] & 5.5e-2 & 234 & [5.2e-2,5.8e-2] \\ 
 \cline{1-7}
      \end{tabular}
\end{center}
\label{cpb2}
\end{table}

\noindent {\bf Key findings.} ({\bf vr-SMFBS}) trajectories are characterized by significantly smaller empirical errors than
({\bf SA}). There is little impact on ({\bf vr-SMFBS}) when
varying the sample growth rate. Moreover, ({\bf vr-SMFBS}) appears to cope better with
large Lipchitz constant. Since we utilize the analytical $L$ to set
$\gamma$, we see smaller steps for large $L$. To show the efficiency
of ({\bf vr-SMFBS}) with complicated feasible set, we change
$\mathcal{X} \triangleq \{x \in \mathbb{R}_J^+ \mid \sum_i x_i \le 10\}$
which  leads to computationally expensive projection steps. As
seen in  the last row of Tables \ref{cpb1} and \ref{cpb2}, ({\bf
vr-SMFBS}) takes far less time than ({\bf SA}).

\subsection{Comparison with SAA schemes}
To show the performance of our proposed schemes , we consider the ({\bf SAA}) scheme used in \cite{jiang2019regularized}. Let $(\omega_{1i})_{i=1}^J,(\omega_{2i})_{i=1}^J$, $\dots,(\omega_{\nu i})_{i=1}^J$ denote independent identically distributed (i.i.d.) samples. Then, with ({\bf SAA}) we solve the following formulation of problem:
\begin{align*}
\left\{\begin{aligned}
 0 &\leq x_i \perp  c_i'(x_i) + r \cdot (X+x_i) - d + \tfrac{1}{\nu}\sum_{l=1}^\nu\lambda_i(\omega_{li}) \geq 0 \\
 0 &\leq y_i(\omega_{li})  \perp h_i(\omega_{li}) + \lambda_i(\omega_{li}) \geq 0\\
0 &\leq \lambda_i(\omega_{li})  \perp x_i - y_i(\omega_{li}) + \epsilon\lambda_i(\omega_{li}) \geq 0, \ \forall l=1,\dots,\nu
\end{aligned}\right\} \\ \forall i=1,\dots,J.
\end{align*} 
This problem is cast as a linear complementarity problem (LCP), allowing for utilizing \texttt{PATH}~\cite{dirkse1995path} to compute a solution.
We compare ({\bf SAA}) with ({\bf vr-SMFBS}) in Table \ref{cp1}. From the results, we observe that although the empirical errors of both schemes are similar, the ({\bf SAA}) scheme takes far longer than ({\bf vr-SMFBS}) when using a large number of samples. In fact, ({\bf vr-SMFBS}) scale well with overall number of evaluations.
\begin{table}[htbp]
\scriptsize
\caption{Comparison of (SAA) with (vr-SMFBS) (u: s-monotone, d: monotone)}
\begin{center}
    \begin{tabular}{ c|cc|cc }
    \hline
    \multirow{2}{*}{$\nu$} & \multicolumn{2}{c|}{SAA} & \multicolumn{2}{c}{vr-SMFBS} \\\cline{2-5}
    & time/s & res & time/s & res  \\\hline
    1000 & 0.7 & 4.5e-4 & 0.3 & 4.8e-4  \\
    2000 & 3.3 & 3.5e-4 & 0.5 & 2.3e-4  \\
     4000 & 6.2 & 1.6e-4 & 0.6 & 1.0e-4  \\
    10000 & 32.7 & 3.7e-5 & 1.2 & 3.4e-5  \\
    20000 & 117.7 & 2.8e-5 & 2.5 & 1.5e-5  \\
    \hline
      \end{tabular} \bigskip \\ 
      
      \begin{tabular}{ c|cc|cc }
    \hline
    \multirow{2}{*}{$\nu$} & \multicolumn{2}{c|}{SAA} & \multicolumn{2}{c}{vr-SMFBS}  \\\cline{2-5}
    & time/s & res & time/s & res  \\\hline
    1000 & 0.7 & 5.6e-2 & 0.3 & 2.7e-2  \\
    2000 & 3.0 & 3.4e-2 & 0.5 & 2.0e-2  \\
     4000 & 5.8 & 2.2e-2 & 0.6 & 1.2e-2  \\
    10000 & 61.8 & 7.8e-3 & 1.2 & 5.3e-3  \\
    20000 & 115.0 & 2.5e-3 & 2.6 & 1.9e-3  \\
     \hline
      \end{tabular}
\end{center}
\label{cp1}
\end{table}

\section{Concluding remarks} Monotone inclusions represent an important class
of problems and their stochastic counterpart subsumes a large class of
stochastic optimization and equilibrium problems. Such objects arise in
optimization, game-theoretic, and model-predictive control problems afflicted
by uncertainty. We propose a variance-reduced splitting framework for resolving
such problems when the map is structured. Under suitable assumptions on the
sample-size, we prove that the scheme displays a.s. convergence guarantees and
achieves optimal linear and sublinear rates in strongly monotone and monotone
regimes while achieving either optimal or near-optimal sample-complexities. By
incorporating state-dependent bounds on noise and weakening unbiasedness
requirements (in strongly monotone settubfs), we develop techniques that can
accommodate far more general settings. Preliminary numerics on a class of
two-stage stochastic variational inequality problems suggest that the scheme
outperform stochastic approximation schemes, as well as sample-average
approximation approaches. 

\appendices
\bibliographystyle{ieeetr}
\bibliography{extrasvi}

\end{document}